\newtheorem{thm}{Theorem}[section]
\newtheorem{lem}[thm]{Lemma}
\newtheorem{prop}[thm]{Proposition}
\newcommand{\thmref}[1]{Theorem~\ref{#1}}
\newcommand{\propref}[1]{Proposition~\ref{#1}}
\newcommand{\lemref}[1]{Lemma~\ref{#1}}
\theoremstyle{remark}
\newcommand{\Z}{{\mathbb Z}}
\newcommand{\Q}{{\mathbb Q}}
\newcommand{\N}{{\mathbb N}}
\newcommand{\C}{{\mathbb C}}
\begin{document}

\title[Digamma functions and Euler-Lehmer constants]{The Digamma function, 
Euler-Lehmer constants and their $p$-adic counterparts}
\author{T. Chatterjee and S. Gun}
\address{T. Chatterjee \\ $\phantom{mmmmmmmmmmmmmmmm
mmmmmmmmmmmmmmmmmmmmmmm}$
Department of Mathematics,
Queen's University, Kingston,
ON K7L3N6,
Canada.}
\email{tapas@mast.queensu.ca}
\address{S. Gun \\ $\phantom{mmmmmmmmmmmmmmmm
mmmmmmmmmmmmmmmmmmmmmmm}$
Institute for Mathematical Sciences, 
C.I.T Campus, 4th Cross street, 
Taramani,  Chennai, 600 113, 
Tamil Nadu, India.}
\email{sanoli@imsc.res.in}

\renewcommand{\thefootnote}{}
\footnote{ \noindent\textbf{} \vskip0pt
\textbf{2010 Mathematics Subject Classification:} 11J91. 
\vskip0pt
\textbf{Key Words:} Digamma function,  generalized Euler-Lehmer constants, $p$-adic Digamma function, 
generalized $p$-adic Euler-Lehmer constants.}

\maketitle

\begin{abstract}
The goal of this article is twofold.  We first extend a result of Murty and Saradha~\cite{MS} 
related to the digamma function at rational arguments. Further, we extend another result 
of the same authors \cite{MS1} about the nature of $p$-adic Euler-Lehmer constants.
\end{abstract}

\smallskip

\section{Introduction}

\medskip

For a real number $x \ne 0, -1, \cdots$,  the digamma function $\psi(x)$ is the logarithmic 
derivative of the  gamma function defined by
$$
- \psi(x)  = \gamma + \frac{1}{x}  + \sum_{n=1}^{\infty} \left( \frac{1}{n+x}  - \frac{1}{n} \right),
$$
where $\gamma$ is Euler's constant. Just like the case of the gamma function,
the nature of the values of the digamma function at algebraic or even rational arguments 
is shrouded in mystery.

In the rather difficult subject of irrationality or transcendence, sometimes it is 
more pragmatic  to look  at a  family of special values 
as opposed to a single specific value and derive something meaningful. An
apt instance here is  the result of Rivoal \cite{TR} about irrationality of infinitude of odd
zeta values as opposed to that of a single specific odd zeta value.

In this context, Murty and Saradha \cite{MS} in a recent work
have made some breakthroughs 
about transcendence of a certain family of digamma values.
In particular, they proved the following.
\begin{thm}[Murty and Saradha]\label{murty} For any 
positive integer $n > 1$, at most one of the $\phi(n) + 1$ numbers
in the set
$$
\{\gamma\} \cup \left\{ \psi(r/n) |  ~ 1 \le  r \le n,  ~(r,n) = 1 \right\}
$$
is algebraic.
\end{thm}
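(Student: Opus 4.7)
The plan is to combine Gauss's classical formula for $\psi$ at rational arguments with a Baker-type theorem on linear forms in logarithms of algebraic numbers. For $1 \le r < n$ with $(r,n)=1$, Gauss's formula reads
$$
\psi(r/n) + \gamma \;=\; -\log(2n) \;-\; \tfrac{\pi}{2}\cot(\pi r/n) \;+\; 2\!\!\sum_{k=1}^{\lfloor (n-1)/2 \rfloor}\!\! \cos(2\pi k r/n)\,\log\sin(\pi k/n).
$$
The numbers $\cot(\pi r/n)$ and $\cos(2\pi k r/n)$ are algebraic, and using $2\sin(\pi k/n) = |1-\zeta^k|$ with $\zeta = e^{2\pi i/n}$, together with $\pi = -i\log(-1)$, one can rewrite the right-hand side as an element of the $\overline{\Q}$-vector space spanned by $1$ and a finite family of logarithms of algebraic numbers drawn from $\{-1,2,n\}\cup\{1-\zeta^k : 1\le k<n\}$.

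\textbf{The two cases.} Suppose, for contradiction, that two elements of $\{\gamma\}\cup\{\psi(r/n):(r,n)=1\}$ are algebraic. In \emph{Case A}, $\gamma$ and some $\psi(a/n)$ are algebraic, so the formula above gives an identity $\beta_0=\sum_j\beta_j\log\eta_j$ with $\beta_j\in\overline{\Q}$ and $\eta_j\in\overline{\Q}^{\times}$. In \emph{Case B}, two distinct $\psi(a/n), \psi(b/n)$ are algebraic; taking their difference cancels the $\gamma$ and $\log(2n)$ terms and yields an analogous identity with a purely algebraic left-hand side.

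\textbf{Baker's theorem and the main obstacle.} By the Baker--Brumer theorem, $1$ together with a $\Q$-linearly independent family of logarithms of algebraic numbers is $\overline{\Q}$-linearly independent. Projecting the identity onto a $\Q$-basis of the log-space then forces all relevant coefficients $\beta_j$ to vanish. The main obstacle is to verify that the coefficients produced by Gauss's formula do not collapse modulo the known multiplicative relations among the cyclotomic numbers $1-\zeta^k$ (such as $\prod_{k=1}^{n-1}(1-\zeta^k)=n$). The cleanest route is to first isolate the coefficient of $\pi$ via complex conjugation: in Case B this coefficient is $-\tfrac{1}{2}\bigl(\cot(\pi a/n)-\cot(\pi b/n)\bigr)$, which is nonzero for distinct $a,b\in(0,n)$ since $\cot$ is injective on $(0,\pi)$, giving an immediate contradiction. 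Case A is more delicate: one must show that the $\overline{\Q}$-linear combination of $\log 2, \log n, 2\pi i$, and the $\log(1-\zeta^k)$ appearing in $\psi(a/n)+\gamma$ cannot be algebraic, which ultimately reduces to a Galois-theoretic statement about characters of $(\Z/n\Z)^{\times}$ -- and this is where the substantive work lies.
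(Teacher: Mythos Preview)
Your proposal leaves Case~A explicitly unfinished, but in fact the very device you use in Case~B disposes of Case~A just as quickly: in Gauss's formula the coefficient of $\pi$ in $\psi(a/n)+\gamma$ is $-\tfrac12\cot(\pi a/n)$, which is nonzero for every $1\le a<n$ with $(a,n)=1$ and $n>2$ (while for $n=2$ one has $\psi(1/2)+\gamma=-2\log 2$ directly). So there is no genuine extra difficulty in Case~A, and no need for the Galois/character argument you anticipate.

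More importantly, there is a much shorter route that sidesteps your ``main obstacle'' entirely. Baker's theorem, in the form quoted in the paper, says that an algebraic linear combination of logarithms of algebraic numbers is either zero or transcendental. Hence you never need to isolate a nonzero \emph{coefficient} or worry about multiplicative relations among the $1-\zeta^k$; it suffices to show that the \emph{value} of the linear form is nonzero. Since $\psi'(x)=\sum_{m\ge 0}(m+x)^{-2}>0$, the function $\psi$ is strictly increasing on $(0,\infty)$; thus $\psi(a/n)\ne\psi(b/n)$ for $a\ne b$, and $\psi(a/n)+\gamma=\psi(a/n)-\psi(1)\ne 0$ for $a<n$. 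This nonvanishing, together with Baker, finishes both cases at once.

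This is exactly how the paper proceeds. The theorem itself is attributed to Murty--Saradha and not reproved, but the paper's proof of its extension (Theorem~\ref{CGR}) runs precisely along these lines: Lemma~\ref{lem2} supplies the identity
\[
-\psi(a/q)-\gamma \;=\; \log q \;-\; \sum_{b=1}^{q-1} e^{-2\pi i a b/q}\log\bigl(1-e^{2\pi i b/q}\bigr),
\]
Lemma~\ref{lem1} (monotonicity of $\psi$) guarantees the relevant differences are nonzero, and Baker's theorem concludes. Your Gauss-formula/cotangent approach is correct where you carry it through, but it is a detour around this shorter argument.
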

In this article, we extend their result and prove the following.
\begin{thm}\label{CGR}
At most one element in the following infinite set
\begin{equation*}
\{ \gamma\} \cup \left\{\psi(r/n)  ~|~~  n> 1,~  1 \le r < n, ~(r,n)=1
\right\} 
\end{equation*}
is algebraic. 
\end{thm}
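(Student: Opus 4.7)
We argue by contradiction: suppose two elements of the set in \thmref{CGR} are algebraic. If both arise from the same denominator, or if one of them is $\gamma$, then \thmref{murty} applies directly and yields a contradiction. The essentially new case is therefore that $\psi(r_1/n_1)$ and $\psi(r_2/n_2)$ are both algebraic, with $(r_i,n_i)=1$ and $n_1 \ne n_2$. The plan is to extend the Murty--Saradha strategy by passing to the common denominator $N = \mathrm{lcm}(n_1, n_2)$ and then invoking Baker's theorem on linear forms in logarithms.

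The starting point is Gauss's classical expansion
$$
\psi(r/n) + \gamma = -\log(2n) - \frac{\pi}{2}\cot\frac{\pi r}{n} + 2\sum_{k=1}^{\lfloor (n-1)/2\rfloor}\cos\frac{2\pi k r}{n}\,\log\sin\frac{k\pi}{n}.
$$
Using $\sin(k\pi/n_i) = \sin((kN/n_i)\pi/N)$, every sine-logarithm appearing at $(r_i, n_i)$ is re-expressed as $\log\sin(j\pi/N)$ for some $1 \le j < N$. Subtract the two Gauss identities (at $(r_1,n_1)$ and $(r_2,n_2)$) to cancel $\gamma$; then use $\pi = -i\log(-1)$ and the fact that each $\cot(\pi r_i/n_i)$ is algebraic to convert every remaining transcendental term into an algebraic multiple of a logarithm of an algebraic number. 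This produces an identity
$$
\psi(r_1/n_1) - \psi(r_2/n_2) = \alpha_{-1}\log(-1) + \alpha_1\log n_1 + \alpha_2\log n_2 + \sum_{j=1}^{\lfloor N/2\rfloor} C_j \log\sin\frac{j\pi}{N},
$$
with $\alpha_\ell, C_j \in \overline{\Q}$. Since the left-hand side is algebraic by hypothesis, we obtain a vanishing $\overline{\Q}$-linear combination of $1$ together with logarithms of nonzero algebraic numbers.

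By Baker's theorem on linear forms in logarithms, such a relation can only hold as a $\overline{\Q}$-linear consequence of $\Q$-linear dependencies among the logarithms involved. The main obstacle is therefore to show that the pattern of coefficients $(\alpha_\ell, C_j)$ cannot be entirely absorbed into the well-known multiplicative relations---principally the reflection $\sin((N-j)\pi/N) = \sin(j\pi/N)$ and the distribution identity $\prod_{k=1}^{n-1}\sin(k\pi/n) = n/2^{n-1}$---among $\{-1, n_1, n_2\} \cup \{\sin(j\pi/N)\}$. To handle this, I would exploit $n_1 \ne n_2$ by choosing (without loss of generality) a prime power $p^a$ dividing $n_2$ but not $n_1$, and considering those indices $j$ for which $N/\gcd(j,N) = p^a$. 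At such $j$ only the conductor-$n_2$ sum contributes to $C_j$, and orthogonality of the Dirichlet characters of $(\Z/n_2\Z)^*$, together with the assumption $(r_2, n_2) = 1$, prevents all such $C_j$ from vanishing simultaneously. This produces a genuinely nontrivial linear form in logarithms, contradicting Baker's theorem and completing the argument.
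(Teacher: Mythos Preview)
Your overall architecture---subtract two digamma expansions to eliminate $\gamma$, then invoke Baker---is exactly the paper's. The divergence is in how you establish that the resulting linear form in logarithms is non-zero, and here you have made life much harder than necessary, to the point that your argument is incomplete.

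The paper uses the identity (its \lemref{lem2})
$$
-\psi(a/q) - \gamma = \log q - \sum_{b=1}^{q-1} e^{-2\pi i b a/q}\log(1 - e^{2\pi i b/q}),
$$
which is equivalent to your Gauss formula but avoids the separate $\cot$ term. Subtracting at $(a_1,q_1)$ and $(a_2,q_2)$ gives $\psi(a_1/q_1)-\psi(a_2/q_2)$ as an algebraic linear combination of logarithms of algebraic numbers. Now the crucial point: the paper does \emph{not} analyse coefficients, multiplicative relations among $\sin(j\pi/N)$, distribution identities, or Dirichlet orthogonality. It simply observes (its \lemref{lem1}) that $\psi'(x)=\sum_{n\ge 0}(n+x)^{-2}>0$ on $(0,\infty)$, so $\psi$ is strictly increasing and hence $\psi(a_1/q_1)-\psi(a_2/q_2)\ne 0$ whenever $a_1/q_1\ne a_2/q_2$. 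Baker's theorem then says the right-hand side is either zero or transcendental; since it equals a non-zero algebraic number, we have an immediate contradiction.

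Your final paragraph, by contrast, tries to prove non-vanishing on the log side by isolating indices $j$ with $N/\gcd(j,N)=p^a$ and appealing to character orthogonality. This is only a sketch: you have not verified that the remaining $C_j$ cannot be absorbed into the full lattice of multiplicative relations among $\{-1,n_1,n_2,\sin(j\pi/N)\}$ (there are more than just reflection and the single distribution identity you mention---every divisor of $N$ contributes one), nor have you made precise which Dirichlet characters are in play or why orthogonality forces some $C_j\ne 0$. All of this labour evaporates once you use the monotonicity of $\psi$ on the left-hand side instead.
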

In a recent work \cite{GMR}, the question of linear independence
of these numbers is studied. 

In another context, Lehmer \cite{DL} defined generalized 
Euler constants $\gamma(r, n)$ for $r, n \in \N$ with $r \le n$ 
by the formula
$$
\gamma(r, n) = \lim_{ x \to \infty} 
\left( \sum_{m \le x \atop m \equiv r\!\!\!\!\!\pmod{n}} \frac{1}{m}  - \frac{\log x}{n}  \right).
$$
Murty and Saradha, in their papers \cite{{MS}, {MS2}}, investigated the
nature of Euler-Lehmer constants $\gamma(r,n)$ and proved results similar
to \thmref{murty} and \thmref{CGR}.  For an exhaustive
account of Euler's constant, see the recent article of Lagarias \cite{JL}.

From now onwards $p$ and $q$ will always denote prime numbers. 
In another work~\cite{MS1}, Murty and Saradha investigated the $p$-adic 
analog $\gamma_p$ of Euler's constant as well as the
generalized $p$-adic Euler-Lehmer constants $\gamma_p(r,q)$. Here
$r \in \N$ with $1 \le r < q$. 
They also studied the values of the $p$-adic digamma function
$\psi_p(r/p)$ for $1 \le r < p$. 

We shall give the definitions of $\gamma_p, \gamma_p(r,q)$ and $\psi_p(x)$ in 
the next section following Diamond  \cite{JD}.   
Here are the results of Murty and Saradha \cite{MS1}.

\begin{thm}[Murty and Saradha]\label{murty1}
Let $q$ be prime. Then at most one of
$$
\gamma_p, ~ \gamma_p(r,q), ~~~~ 1 \le r < q
$$
is algebraic.
\end{thm}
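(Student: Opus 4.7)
The plan is to imitate the archimedean strategy that Murty and Saradha employed to prove \thmref{murty}, but with every analytic ingredient replaced by its $p$-adic counterpart as developed by Diamond. The starting point is a $p$-adic Gauss-type (distribution) formula for the prime $q \ne p$, which expresses each value $\psi_p(r/q)$ for $1 \le r < q$ as an explicit $\overline{\Q}$-linear combination of $\gamma_p$ together with the $p$-adic logarithms $\log_p(1 - \zeta_q^k)$, where $\zeta_q$ is a primitive $q$-th root of unity and $1 \le k \le q-1$.

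Next I would extract, directly from Diamond's definitions, an identity relating each generalized $p$-adic Euler--Lehmer constant $\gamma_p(r,q)$ to $\psi_p(r/q)$, $\gamma_p$ and $\log_p(q)$, playing the role of the archimedean identity
$$
\gamma(r,n) \;=\; -\frac{1}{n}\bigl(\psi(r/n) + \log n\bigr).
$$
Combined with the previous step, this writes each $\gamma_p(r,q)$ as an algebraic-linear combination of $\gamma_p$ and the $q-1$ quantities $\log_p(1 - \zeta_q^k)$, up to a term in $\overline{\Q} \cdot \log_p(q)$.

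Assume for contradiction that two elements of $\{\gamma_p\} \cup \{\gamma_p(r,q) : 1 \le r < q\}$ are algebraic. Substituting into the formulas above produces a nontrivial relation, with algebraic coefficients, among the $p$-adic logarithms $\log_p(1-\zeta_q^k)$ and $\log_p(q)$, possibly together with $\gamma_p$ depending on the pair that is assumed algebraic. The crux of the argument is to rule out any such relation: this requires, on the one hand, a Vandermonde/character-theoretic computation showing that the transition matrix between $\bigl(\psi_p(r/q)\bigr)_{1 \le r < q}$ and $\bigl(\log_p(1-\zeta_q^k)\bigr)_{1 \le k < q}$ is invertible over $\overline{\Q}$, and on the other hand an appeal to Brumer's $p$-adic analog of Baker's theorem on linear forms in $p$-adic logarithms of algebraic numbers. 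I expect the main obstacle to be the bookkeeping needed to apply Brumer's theorem cleanly once $\gamma_p$ has been eliminated, since one must check the multiplicative independence of the relevant algebraic numbers $\{1-\zeta_q^k, q\}$ in $\overline{\Q}^\times$; the transcendence input itself is then standard.
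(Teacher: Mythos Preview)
This theorem is quoted from Murty--Saradha \cite{MS1} and is not re-proved here; the paper instead proves the generalization \thmref{req1}, explicitly deferring the single-prime case to \cite{MS1}. That said, the machinery assembled in the paper (Diamond's identity in Theorem~2.1, Kaufman's $p$-adic Baker theorem in \thmref{K}, and the multiplicative-independence statement in \propref{imp}) is precisely what drives that proof, so one can compare your plan against it.

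Your strategy is in the right spirit but takes an unnecessary detour. The paper (following Murty--Saradha) uses Diamond's identity
\[
q\,\gamma_p(r,q)\;=\;\gamma_p \;-\;\sum_{a=1}^{q-1}\zeta_q^{-ar}\log_p(1-\zeta_q^a)
\]
directly, so there is no need to pass through $\psi_p(r/q)$ or to introduce a $\log_p q$ term at all. Subtracting two such identities eliminates $\gamma_p$ and, after pairing $a$ with $q-a$, yields a nonzero $\overline{\Q}$-linear form in $\log_p(1-\zeta_q)$ and $\log_p\!\bigl((1-\zeta_q^a)/(1-\zeta_q)\bigr)$ for $1<a<q/2$; \propref{imp} supplies the multiplicative independence and \thmref{K} the transcendence. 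Your proposed check of multiplicative independence of $\{1-\zeta_q^k,\,q\}$ would in fact fail, since $\prod_{a=1}^{q-1}(1-\zeta_q^a)=q$; this is harmless because $\log_p q$ never enters the clean route, but it shows that your intermediate formulation would need to be rewritten in the basis the paper uses before Brumer/Kaufman can be applied. Finally, note that your opening restriction to $q\ne p$ leaves a case of the statement unaddressed; Diamond's identity is stated for all $q>1$, and the same argument goes through.
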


\begin{thm}[Murty and Saradha]\label{murty2}
The numbers $\psi_p(r/p) + \gamma_p$  are transcendental
for $1 \le r < p$. 
\end{thm}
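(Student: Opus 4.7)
The plan is to exhibit $\psi_p(r/p) + \gamma_p$ as the Iwasawa $p$-adic logarithm of an explicit non-zero algebraic number, and then to invoke the $p$-adic analog of the Hermite--Lindemann theorem (due to Mahler, with refinements by Brumer).

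The first step is to produce from Diamond's definition of $\psi_p$ (recalled in the next section) a $p$-adic analog of the classical Gauss digamma formula. Concretely, working in $\C_p$ and fixing a primitive $p$-th root of unity $\zeta$, one expects to derive an identity of the shape
$$
\psi_p(r/p) + \gamma_p \;=\; \frac{1}{p}\sum_{j=1}^{p-1}\zeta^{-rj}\log_p(1-\zeta^j).
$$
The derivation mirrors the complex case: expand the defining series for $\psi_p$ according to residues modulo $p$, apply a discrete Fourier inversion on $(\Z/p\Z)^\times$, and use Diamond's integral representation to identify the resulting partial sums with the values $\log_p(1-\zeta^j)$. After collecting Galois conjugates over $\Q$, the right-hand side can be rewritten as $\log_p(\alpha_r)$ for a single explicit algebraic number $\alpha_r\in\overline{\Q}^\times$, built multiplicatively from the cyclotomic elements $1-\zeta^j$.

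With this identity in hand, transcendence follows by invoking Mahler's theorem: for any $\alpha\in\overline{\Q}^\times$ with $\log_p\alpha\ne 0$, the value $\log_p\alpha$ is transcendental. To conclude one therefore only needs to verify that $\psi_p(r/p)+\gamma_p\ne 0$. This non-vanishing may be obtained either by a direct $p$-adic valuation estimate on the cyclotomic sum above, or alternatively by appealing to Brumer's theorem on the $\overline{\Q}$-linear independence of $p$-adic logarithms of multiplicatively independent algebraic numbers, together with the classical independence of cyclotomic units of $\Q(\zeta)$.

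The principal obstacle is the first step, namely extracting the clean cyclotomic identity from Diamond's formalism. Care is required with the Iwasawa branch of $\log_p$; for instance the convention $\log_p(p)=0$ kills a would-be $\log p$ term that appears in the complex Gauss formula, simplifying the $p$-adic expression. Once this translation is in place, the non-vanishing of $\log_p \alpha_r$ and the subsequent application of Mahler's theorem are essentially routine consequences of standard $p$-adic transcendence theory, and the transcendence of $\psi_p(r/p)+\gamma_p$ for every $1\le r<p$ follows uniformly.
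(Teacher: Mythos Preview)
Your plan has a genuine gap at the step where you claim the linear form can be ``rewritten as $\log_p(\alpha_r)$ for a single explicit algebraic number $\alpha_r$.'' The coefficients in your Gauss-type identity are the roots of unity $\zeta^{-rj}$ (or, after pairing $j$ with $p-j$, the real cyclotomic numbers $\zeta^{-rj}+\zeta^{rj}$), and these are \emph{not} rational. A sum $\sum_j c_j\log_p\beta_j$ collapses to $\log_p\bigl(\prod_j\beta_j^{c_j}\bigr)$ only when the $c_j$ are integers (or rationals, after clearing denominators); with genuinely irrational algebraic coefficients there is no such product, and no amount of ``collecting Galois conjugates'' repairs this --- the expression already lies in $\Q_p$, but that does not force it to be the $p$-adic logarithm of an algebraic number. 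Consequently Mahler's $p$-adic Hermite--Lindemann theorem, which concerns a single $\log_p\alpha$, is not applicable here. (Incidentally, the identity one gets from Diamond has no factor $1/p$: it reads $\psi_p(r/p)+\gamma_p=-\sum_{a=1}^{p-1}\zeta^{-ar}\log_p(1-\zeta^a)$.)

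The approach actually taken by Murty--Saradha, and the one the present paper uses throughout, is to stay with the linear form and invoke a $p$-adic \emph{Baker}-type result (Kaufman's theorem, stated here as \thmref{K}) rather than Hermite--Lindemann. After the pairing $a\leftrightarrow p-a$ one obtains
\[
\psi_p(r/p)+\gamma_p \;=\; \log_p(1-\zeta)\;-\;\sum_{1<a<p/2}(\zeta^{ar}+\zeta^{-ar})\,\log_p\!\left(\frac{1-\zeta^a}{1-\zeta}\right),
\]
a non-trivial $\overline{\Q}$-linear form in $p$-adic logarithms of the multiplicatively independent numbers $1-\zeta$ and $(1-\zeta^a)/(1-\zeta)$, $1<a<p/2$ (this is the $|{\rm I}|=1$ case of \propref{imp}). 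Kaufman's theorem then gives transcendence directly, with non-vanishing coming for free since the coefficient of $\log_p(1-\zeta)$ is $1$. You already have Brumer's theorem in hand --- you invoke it for the non-vanishing check --- but it (or Kaufman) is in fact the main engine, not an auxiliary; once you use it on the full linear form, the detour through Mahler becomes unnecessary and the obstruction above disappears.
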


In this paper, we generalize these results. Let $\mathcal{P}$
denote the set of prime numbers in $\N$.  Here we prove
the following:

\begin{thm}\label{req1}
At most one number in the following set
\begin{equation*}
\{ \gamma_p\} \cup \{ \gamma_p(r , q):~ q \in \mathcal{P},
 ~1 \le r < q / 2 \}
\end{equation*}
is algebraic.
\end{thm}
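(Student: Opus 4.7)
\smallskip

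\noindent\textbf{Proof plan.}
My plan is to reduce \thmref{req1} to \thmref{murty1} together with a $p$-adic analog of Baker's theorem on linear forms in $p$-adic logarithms. A key preliminary observation is that the restriction $1 \le r < q/2$ precisely enumerates distinct values: the Morita identity $\Gamma_p(x)\Gamma_p(1-x) = \pm 1$ yields $\psi_p(x) = \psi_p(1-x)$, and via the $p$-adic analog of the classical identity $n\gamma(r,n) = -\psi(r/n) - \log n$, this forces $\gamma_p(r,q) = \gamma_p(q-r,q)$.

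Assume for contradiction that two elements of the stated set are algebraic. Three subcases arise. If one of the algebraic elements is $\gamma_p$, or if both are of the form $\gamma_p(r_1,q), \gamma_p(r_2,q)$ for the \emph{same} prime $q$ (with $r_1 \ne r_2$ in the stated range, hence with distinct values by the symmetry above), then \thmref{murty1} applied to that prime $q$ is immediately violated. The substantive remaining case is
\[
\gamma_p(r_1, q_1),\ \gamma_p(r_2, q_2)\ \in\ \overline{\Q}, \qquad q_1 \ne q_2 \text{ prime}.
\]

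To dispatch this last case, I would invoke the Diamond-type formula expressing each $\gamma_p(r,q)$ as an explicit $\overline{\Q}$-linear combination of $\gamma_p$, $\psi_p(r/q)$, and $\log_p q$ (with a slightly different shape when $q = p$, in which case \thmref{murty2} can be invoked directly). The two algebraicity hypotheses then yield two $\overline{\Q}$-linear relations among $\gamma_p$, $\log_p q_1$, $\log_p q_2$, $\psi_p(r_1/q_1)$, and $\psi_p(r_2/q_2)$. Applying \thmref{murty1} at each $q_i$ separately already forces $\gamma_p$ to be transcendental, and Brumer's $p$-adic Baker theorem on the $\overline{\Q}$-linear independence of $\log_p q_1$ and $\log_p q_2$ for distinct primes $q_1, q_2$ different from $p$ then allows one to isolate the $\psi_p$ contributions. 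A transcendence statement for $\psi_p(r/q) + \gamma_p$ extending \thmref{murty2} from $q = p$ to a general prime $q$ rules out the remaining relation, completing the contradiction.

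The step I expect to be the main obstacle is precisely the coordination between the two primes: our hypotheses control only the mixed combinations $\psi_p(r_i/q_i) + c_i \log_p q_i + d_i \gamma_p$, and showing that no such pair with $q_1 \ne q_2$ can simultaneously be algebraic requires both Brumer's Baker-type theorem for $p$-adic logarithms and a delicate extension of \thmref{murty2} to moduli $q \ne p$. A secondary subtlety is the handling of the boundary cases $q_1 = p$ or $q_2 = p$, where $\log_p q$ vanishes or the Diamond formula takes a different form; these must be dispatched separately by direct appeal to \thmref{murty2}.
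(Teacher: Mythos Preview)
Your case division matches the paper's, and the reduction of the ``same prime'' and ``$\gamma_p$'' cases to \thmref{murty1} is correct. The difficulty is entirely in the mixed case $q_1\ne q_2$, and there your plan has a real gap.

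You propose to write $\gamma_p(r,q)$ as a $\overline{\Q}$-combination of $\gamma_p$, $\psi_p(r/q)$, and $\log_p q$, then separate off $\log_p q_1,\log_p q_2$ by Brumer and finish with ``a transcendence statement for $\psi_p(r/q)+\gamma_p$ extending \thmref{murty2} from $q=p$ to a general prime $q$.'' But that last ingredient is precisely the substance of the theorem: you have not indicated any mechanism for proving it, and even if one knew each $\psi_p(r_i/q_i)+\gamma_p$ to be transcendental, that would not by itself preclude an algebraic $\overline{\Q}$-linear relation among $\psi_p(r_1/q_1)$, $\psi_p(r_2/q_2)$, $\log_p q_1$, $\log_p q_2$ of the specific shape your two hypotheses produce. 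Brumer's theorem on $\log_p q_1,\log_p q_2$ alone cannot ``isolate'' the $\psi_p$ contributions, because nothing in your plan controls the $\overline{\Q}$-linear dependence between the $\psi_p$ values and the $\log_p q_i$. (There is also a secondary issue: the relation you invoke between $\gamma_p(r,q)$, $\psi_p(r/q)$, and $\log_p q$ mirrors the archimedean identity, but in Diamond's setup the definition of $\gamma_p(r,q)$ bifurcates according to whether $\nu(r/q)<0$, and for $q\ne p$ one is in the other regime; you would need to check carefully what formula actually holds.)

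The paper avoids all of this by never introducing $\psi_p(r/q)$ or $\log_p q$. It uses Diamond's identity
\[
q\,\gamma_p(r,q)\;=\;\gamma_p\;-\;\sum_{a=1}^{q-1}\zeta_q^{-ar}\log_p(1-\zeta_q^a),
\]
so that $q_1\gamma_p(r_1,q_1)-q_2\gamma_p(r_2,q_2)$ is an explicit $\overline{\Q}$-linear form in the numbers $\log_p(1-\zeta_{q_1})$, $\log_p(1-\zeta_{q_2})$, and $\log_p\bigl((1-\zeta_{q_i}^a)/(1-\zeta_{q_i})\bigr)$ with $1<a<q_i/2$. The real work (\propref{imp}) is to show that these cyclotomic numbers are multiplicatively independent across \emph{different} primes $q_1,q_2$, via norm computations and the disjointness $\Q(\zeta_{q_1})\cap\Q(\zeta_{q_2})=\Q$; Kaufman's $p$-adic Baker theorem then gives transcendence of the linear form directly. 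In short, the missing idea in your plan is to work with the cyclotomic logarithms $\log_p(1-\zeta_q^a)$ themselves and establish their multiplicative independence, rather than trying to route the argument through $\psi_p(r/q)$ and an unproved extension of \thmref{murty2}.
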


If we normalize the $p$-adic Euler-Lehmer constants by setting
\begin{equation*}
 \gamma_p^*(r,q)=q\gamma_p(r,q),
\end{equation*}
then we have the following result:
\begin{thm}\label{nor}
All the numbers in the following list
$$
\gamma_p, \phantom{m}
\gamma_p^*(r, q),~ q \in \mathcal{P},
 ~1 \le r < q / 2
$$
are distinct. 
\end{thm}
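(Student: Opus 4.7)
The plan is to argue by contradiction, using the same $p$-adic Baker--Brumer type machinery that underlies \thmref{req1} together with the transcendence argument of Murty and Saradha behind \thmref{murty2}. I would begin by supposing that two of the listed numbers coincide; this splits into two cases. Case (i): $\gamma_p = \gamma_p^*(r,q)$ for some prime $q$ and some $r$ with $1 \le r < q/2$. Case (ii): $\gamma_p^*(r_1,q_1) = \gamma_p^*(r_2,q_2)$ for two distinct admissible pairs $(r_i,q_i)$.

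The key input would be the $p$-adic analogue of the classical identity $n\gamma(r,n) = -\psi(r/n) - \log n$, which in Diamond's setup expresses
\begin{equation*}
\gamma_p^*(r,q) = q\gamma_p(r,q) = -\psi_p(r/q) - \log_p q
\end{equation*}
as a $\overline{\Q}$-linear combination of $\psi_p(r/q)$ and $\log_p q$, and similarly expresses $\gamma_p$ via $\psi_p(1)$. Substituting these identities, the hypothesized equality in either case becomes a nontrivial $\overline{\Q}$-linear relation among numbers of the shape $\psi_p(r_i/q_i) + \gamma_p$ and $p$-adic logarithms of rational integers. Using the Ferrero--Greenberg-type formula expressing each $\psi_p(r/q) + \gamma_p$ as a $\overline{\Q}$-linear combination of $p$-adic logarithms of algebraic numbers (the same input driving \thmref{murty2} and \thmref{req1}), this relation should reduce to a $\overline{\Q}$-linear dependence purely among $p$-adic logarithms of algebraic numbers.

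I would then apply Brumer's $p$-adic analogue of Baker's theorem, which forces the algebraic numbers appearing as arguments of the $p$-adic logarithms to be multiplicatively dependent modulo roots of unity. In each case, the algebraic numbers involved include the distinct rational primes $q$ (or $q_1$ and $q_2$) together with Gauss-sum-type quantities of distinct conductors, and an elementary factorization argument should rule out such a multiplicative dependence, yielding the desired contradiction.

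The hardest part will be the bookkeeping required to verify that, after passing to $p$-adic logarithms, the linear relation produced in each case is genuinely nontrivial and that the relevant algebraic numbers are multiplicatively independent modulo torsion. This is essentially the same algebraic input needed in the proof of \thmref{req1}, so I expect the proof of \thmref{nor} to be a short refinement of that argument rather than requiring a fundamentally new ingredient.
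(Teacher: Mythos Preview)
Your proposal is correct and follows essentially the same strategy as the paper: argue by contradiction, convert the assumed equality into a $\overline{\Q}$-linear combination of $p$-adic logarithms of algebraic numbers, and invoke a $p$-adic Baker-type theorem together with a multiplicative-independence statement for cyclotomic quantities.

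The paper's execution is somewhat more direct than your outline. Rather than routing through $\psi_p(r/q)$ and an extra $\log_p q$ term, the paper applies Diamond's identity
\[
\gamma_p^*(r,q)=\gamma_p-\sum_{a=1}^{q-1}\zeta_q^{-ar}\log_p(1-\zeta_q^a)
\]
immediately, so each difference $\gamma_p^*(r_1,q_1)-\gamma_p^*(r_2,q_2)$ (or $\gamma_p^*(r,q)-\gamma_p$) is already a cyclotomic linear form in the $\log_p(1-\zeta_{q_i}^a)$, with no rational-prime logarithms to manage separately. The paper also explicitly splits your Case~(ii) into the sub-cases $q_1=q_2$ and $q_1\neq q_2$: for $q_1=q_2=q$ the nontriviality of the linear form reduces to the elementary fact that $\zeta_q^{ar_1}+\zeta_q^{-ar_1}\neq\zeta_q^{ar_2}+\zeta_q^{-ar_2}$ when $1\le r_1,r_2<q/2$ and $r_1\neq r_2$, while for $q_1\neq q_2$ the transcendence is exactly \lemref{req}. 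Your route would reach the same endpoint but with an extra layer of unpacking; note also that the classical relation is $f\gamma(r,f)=\log f-\psi(r/f)$, so the sign on $\log_p q$ in your displayed identity is off, and you should check that the $p$-adic analogue you invoke is actually available in Diamond's framework before relying on it.
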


Using this theorem, we prove the following:
\begin{thm}\label{CGR1}
As before, let $q$ run through the set of all prime numbers. Then
there is at most one pair of repetition among the numbers
\begin{equation*}
\gamma_p,  ~\gamma_p(r,q), ~~~
1 \le r < q / 2. 
\end{equation*}
Also if there is such a repetition, then $\gamma_p$ is transcendental.
\end{thm}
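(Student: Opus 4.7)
The plan is to translate any repetition $\gamma_p(r_1,q_1) = \gamma_p(r_2,q_2)$ into a $\Q$-linear relation involving $\gamma_p$ and $p$-adic logarithms of explicit algebraic numbers, via the Diamond-style formula for $\gamma_p^*(r,q)$ that underpins the proof of \thmref{nor}, and then invoke Brumer's $p$-adic analog of Baker's theorem. The role of \thmref{nor} is to guarantee that each such translation is non-trivial.

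First I would observe that a repetition $\gamma_p(r_1,q_1) = \gamma_p(r_2,q_2)$ between two distinct list-members must have $q_1 \ne q_2$: if $q_1 = q_2$, multiplying by $q_1$ would give $\gamma_p^*(r_1,q_1) = \gamma_p^*(r_2,q_2)$, contradicting the distinctness asserted by \thmref{nor}. Writing $\gamma_p^*(r,q) = \gamma_p - \log_p \omega(r,q)$ for the algebraic number $\omega(r,q)$ produced by Diamond's formula (the decomposition used in establishing \thmref{nor}), the repetition is equivalent to
\begin{equation*}
(q_2 - q_1)\,\gamma_p \;=\; q_2 \log_p \omega(r_1,q_1) \,-\, q_1 \log_p \omega(r_2,q_2).
\end{equation*}
If $\gamma_p$ were algebraic, the displayed identity would be a non-trivial $\overline{\Q}$-linear dependence among $1$ and two $p$-adic logarithms of algebraic numbers, and Brumer's theorem would then force a multiplicative algebraic relation between $\omega(r_1,q_1)$ and $\omega(r_2,q_2)$. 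Since these are cyclotomic quantities attached to two distinct prime conductors, no such relation exists, and one concludes that any repetition forces $\gamma_p$ to be transcendental.

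For uniqueness, suppose two distinct pairs of repetition exist. Applying the above step twice gives two identities of the displayed form, and eliminating $\gamma_p$ between them yields a $\Q$-linear relation purely in the four $p$-adic logarithms $\log_p \omega(r_i,q_i)$, $i = 1,2,3,4$. Brumer's theorem converts this into a multiplicative algebraic dependence among the four $\omega$'s, which I would rule out using the same cyclotomic independence input that drives \thmref{nor}. The main obstacle will be the case analysis: two disjoint repetitions, three mutually equal $\gamma_p(r_i,q_i)$'s, or one pair of repetitions involving $\gamma_p$ itself, each producing a slightly different multiplicative constraint on the $\omega$'s that must be refuted separately; verifying the cyclotomic independence in each regime (especially when some of the conductors coincide) is where the real work lies.
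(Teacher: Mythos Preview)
Your overall strategy---reduce repetitions to identities in $p$-adic logarithms via Diamond's formula, then apply a $p$-adic Baker-type theorem together with a multiplicative-independence statement for cyclotomic quantities---is exactly the paper's approach. The paper uses Kaufman's theorem (its \thmref{K}) where you invoke Brumer, and its \lemref{imp1}, built on \propref{imp}, is the precise form of the ``cyclotomic independence'' you allude to; the elimination of $\gamma_p$ between two repetition identities is also what the paper does.

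There is, however, a genuine slip in your write-up. Diamond's formula does \emph{not} give $\gamma_p^*(r,q) = \gamma_p - \log_p \omega(r,q)$ for a single algebraic number $\omega(r,q)$. What it gives is
\[
\gamma_p^*(r,q) \;=\; \gamma_p \;-\; \sum_{a=1}^{q-1} \zeta_q^{-ar}\,\log_p(1-\zeta_q^a),
\]
an \emph{algebraic linear combination} of $p$-adic logarithms whose coefficients $\zeta_q^{-ar}$ are roots of unity, not rationals, so the sum cannot be collapsed to $\log_p$ of one algebraic number. Consequently, when you apply Brumer/Kaufman, the linear form involves not two but roughly $(q_1-1)+(q_2-1)$ logarithms, and the multiplicative constraint it produces is among the individual numbers $1-\zeta_{q_i}^a$, not between two aggregate ``$\omega$''s. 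Ruling out that constraint is exactly \propref{imp}: the elements $1-\zeta_{q_i}$ and $(1-\zeta_{q_i}^{a_i})/(1-\zeta_{q_i})$ for $1<a_i<q_i/2$ are multiplicatively independent as the prime conductors vary. The paper first pairs $a$ with $-a$ to rewrite the sums against this specific basis (with nonzero coefficients $\zeta_{q_i}^{ar_i}+\zeta_{q_i}^{-ar_i}$) before invoking \lemref{imp1}. Your case analysis is on target, but until you replace the fictitious $\omega(r,q)$ by the actual linear form and identify the independence input as \propref{imp}, the argument as written does not go through.
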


Finally, we prove the following theorem.

\begin{thm}\label{CGR2}
Fix an integer $n>1$. At most one element of the following set  
$$
\left\{ \psi_p(r / p^n) + \gamma_p  ~: ~1 \le r < p^n, ~(r, p)=1  \right\}
$$
is algebraic. Moreover,  
$ \psi_p(r/p)  +  \gamma_p$ are distinct when $1\le r < p/2$.
\end{thm}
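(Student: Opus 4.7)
The plan is to express each $\psi_p(r/p^n) + \gamma_p$ as an explicit $\Q$-linear combination of $p$-adic logarithms of algebraic numbers, and then reduce both assertions either to a Baker--Brumer style independence statement or to \thmref{nor}. Starting from Diamond's \cite{JD} construction of $\psi_p$ as the derivative of the $p$-adic log gamma $G_p$, I would first derive a $p$-adic analogue of the classical identity $\psi(r/n) + \gamma = -\log n - n\gamma(r,n)$. In the $p$-adic setting the $\log n$ contribution is replaced by (or collapses to) a $p$-adic logarithm of an algebraic number, yielding a formula of the shape
$$
\psi_p(r/p^n) + \gamma_p \;=\; -\,p^n\,\gamma_p(r,p^n) \;+\; L_{r,n},
$$
where $L_{r,n}$ lies in the $\overline{\Q}$-span of $p$-adic logarithms of algebraic numbers (for instance, of suitable units of $\Q(\zeta_{p^n})$).

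For the first assertion, assume for contradiction that two distinct values $\psi_p(r_1/p^n) + \gamma_p$ and $\psi_p(r_2/p^n) + \gamma_p$ are both algebraic. Subtracting them and substituting the standard Diamond-style expansion of each $\gamma_p(r, p^n)$ as a $\Q$-linear combination of $\log_p(1 - \zeta^k)$ (for a fixed primitive $p^n$-th root of unity $\zeta$) produces an $\overline{\Q}$-linear relation among $p$-adic logarithms of algebraic numbers together with an algebraic constant. A $p$-adic Baker--Brumer theorem then forces this relation to be trivial; the non-vanishing of the coefficients, which can be verified by a direct character computation on $(\Z/p^n\Z)^\times$, supplies the contradiction.

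For the distinctness statement, specialize to $n=1$ and invoke \thmref{nor} with $q = p$: the normalized values $\gamma_p^*(r,p) = p\,\gamma_p(r,p)$ are pairwise distinct for $1 \le r < p/2$ and each is distinct from $\gamma_p$. Through the $n=1$ case of the identity above, in which the contribution $L_{r,1}$ is either absent or constant in $r$, the pairwise distinctness of the $\gamma_p^*(r,p)$ transfers directly to that of $\psi_p(r/p) + \gamma_p$ in the claimed range.

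The principal technical obstacle is establishing the identity expressing $\psi_p(r/p^n) + \gamma_p$ as a controlled $\overline{\Q}$-linear combination of $p$-adic logarithms for general $n > 1$, and verifying that the linear combination arising from the difference above is not identically zero. Once this explicit formulation is in hand, the transcendence dichotomy becomes a routine application of Baker--Brumer, while the distinctness assertion falls immediately out of \thmref{nor}.
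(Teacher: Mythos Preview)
Your plan is correct and is essentially the paper's argument, with two cosmetic differences worth noting.

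First, the detour through $\gamma_p(r,p^n)$ is unnecessary: Diamond already gives a closed formula for $\psi_p$ at such arguments, so the paper writes directly
\[
\psi_p(r_1/p^n)-\psi_p(r_2/p^n)
=\sum_{a=1}^{p^n-1}\bigl(\zeta_{p^n}^{-ar_1}-\zeta_{p^n}^{-ar_2}\bigr)\log_p(1-\zeta_{p^n}^{a}),
\]
pairs $a$ with $-a$, rewrites in terms of the cyclotomic units $(1-\zeta_{p^n}^{a})/(1-\zeta_{p^n})$, and applies \lemref{imp1}. In particular there is no extra term $L_{r,n}$ of the type you describe; once you compare with the stated identity $q\gamma_p(r,q)=\gamma_p-\sum_a\zeta_q^{-ar}\log_p(1-\zeta_q^a)$, the ``$L_{r,n}$'' you anticipate is just the constant $\gamma_p$, which cancels upon subtraction anyway. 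Note also that for the first assertion you do not actually need to verify non-vanishing of the coefficients by a character computation: if they all vanished the difference would be $0$, contradicting the assumed distinctness of the two set elements.

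Second, for the distinctness statement your reduction to \thmref{nor} with $q=p$ is valid and slightly slicker than what the paper does. The paper instead repeats the linear-form computation, obtaining coefficients $\zeta_p^{-ar_1}+\zeta_p^{ar_1}-\zeta_p^{-ar_2}-\zeta_p^{ar_2}$ and observing these are not all zero when $1\le r_1\ne r_2<p/2$, so that \lemref{imp1} gives transcendence (hence non-vanishing). Since the proof of \thmref{nor} is exactly this same computation, the two routes are equivalent.
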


\section{Preliminaries}

For all discussions in this section, let us
fix a prime $p$. Let $\overline{\Q}_p$ be a fixed algebraic
closure of $\Q_p$ and $\C_p$ be its completion. We fix an embedding
of $\overline{\Q}$ into $\C_p$. Thus the elements
in the set $\C_p\setminus \overline{\Q}$ are the transcendental 
numbers.

We begin with recalling the notion of $p$-adic logarithms
which are of  primary importance in our context.
For the elements in the open unit ball around $1$, that is
$$
D := \{\alpha \in \C_p : |\alpha - 1|_p < 1\},
$$   
$\log_p\alpha$ is defined using
the formal power series
$$
\log(1+X) = \sum_{n=1}^{\infty}\frac{(-1)^{n+1} X^n}{n}
$$
which has radius of convergence $1$.
To extend this to all of $\C_p^{\times}$, note that
every element $\beta \in \C_p^{\times}$ is uniquely
expressible as 
$$
\beta = p^r w \alpha
$$
where $\alpha \in D$, $r \in \Q$ and $w$ is a root of unity
of order prime to $p$. Here $p^r$ is the positive real $r$-th power
of $p$ in $\overline{\Q}$, embedded in  $\overline{\Q}_p$
from the beginning. With this, one defines
$$
\log_p\beta := \log_p \alpha.
$$
Note that $\log_p \beta = 0$ if and only
if $\beta$ is $p^r$ times a root of unity.
We refer to Washington \cite{LW}, Chapter 5 for details.

We shall now define the $p$-adic analog of the digamma
function following the strategy of Diamond. The idea is 
to first define a suitable analog of the classical $\log$-gamma
function. This is defined, for $x \in \C_p^{\times}$, as
$$
G_p(x) = \lim_{k \to \infty} \frac{1}{p^k} \sum_{n=0}^{p^k-1}
(x+n)\log_p(x+n) -(x+n).
$$
This function satisfies properties analogous to the classical
 $\log$-gamma function; for instance it satisfies
$$
G_p(x+1) = G_p(x) + \log_p x.
$$
It also satisfies an analog of the classical Gauss' identity
(up to a term $\log\sqrt{2\pi}$ ),
namely 
$$
G_p(x) =\left(x-\frac{1}{2}\right) \log_p m 
+  \sum_{a=0}^{m-1} G_p\left(\frac{x+a}{m}\right)
$$
for a positive integer $m$ when the right side is defined.

The $p$-adic digamma function $\psi_p(x)$ is defined as 
the derivative of $G_p(x)$ and hence is given by (for 
$-x \notin \N$),
$$
\psi_p(x)=  \lim_{k \to \infty} \frac{1}{p^k} \sum_{n=0}^{p^k-1}
\log_p(x+n).
$$ 

Recall that the classical generalised Euler constant $\gamma(r,f)$
defined by Lehmer satisfies 
$$
\psi(r/f) = \log f - f \gamma(r,f)
$$
for $1 \leq r \leq f$.

In the $p$-adic set up, one also defines
$\gamma_p(r,f)$ for integers $r,f$ with $f \geq 1$ as follows.
If the $p$-adic valuation $\nu(r/f)$ of $r/f$ is negative, then
$$
\gamma_p(r,f) = - \lim_{k \to \infty} \frac{1}{fp^k} \sum_{m \geq 1,\atop
m\equiv r({\rm mod} f)}^{fp^k-1}
\log_p m.
$$
On the other hand when  $\nu(r/f) \geq 0$, we first write
$f$ as $f=p^kf_1$ with $(p,f_1) = 1$ and then define
$$
\gamma_p(r,f) =  \frac{p^{\varphi(f_1)}}{p^{\varphi(f_1)}-1}
\sum_{n \in N(r,f)} \gamma_p(r+nf,p^{\varphi(f_1)}f) 
$$
where
$$
N(r,f) = \{n~|~0 \leq n < p^{\varphi(f_1)},~nf+r \not\equiv 0~({\rm mod}
~ p^{\varphi(f_1)+k}) \}.
$$
Finally, we set
$$
\gamma_p = \gamma_p(0,1) = - \frac{p}{p-1}
\lim_{k \to \infty} 
\frac{1}{p^k}
\sum_{m \geq 1, \atop (m,p)=1}^{p^k-1}
\log_p m.
$$
 We shall need the following identity of Diamond (see p.334 of
\cite{JD}). 
\begin{thm}
If $q>1$ and $\zeta_q$ is a primitive $q$-th root
of unity, then 
$$
q \gamma_p(r, q) = \gamma_p -
 \sum_{a=1}^{q-1} \zeta_{q}^{-ar} \log_p(1 - \zeta_q^a).
$$
\end{thm}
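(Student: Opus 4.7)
The plan is to prove Diamond's identity by expressing both sides as limits of truncated sums of $p$-adic logarithms and matching them through finite Fourier analysis on $\Z/q\Z$. I treat the case $(q,p)=1$ in detail; the case $p\mid q$ (in particular $q=p$) follows by the same method, applying the first-branch definition of $\gamma_p(r,q)$ directly since then $\nu_p(r/q)<0$.

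The first reduction puts $q\gamma_p(r,q)$ into the clean form
$$
q\gamma_p(r,q)=-\lim_{k\to\infty}\frac{T_r(qp^k)}{p^k},\qquad
T_r(N):=\sum_{m\equiv r\,(\mathrm{mod}\ q),\ 1\le m<N}\log_p m.
$$
When $(q,p)=1$, $\nu_p(r/q)\ge 0$, so the second branch writes $\gamma_p(r,q)$ as a weighted sum of $\gamma_p(r+nq,p^{\varphi(q)}q)$ over $n\in N(r,q)$, each having negative $p$-adic valuation and hence expandable via the first-branch limit. Summing over $n\in N(r,q)$, the union of residue classes modulo $p^{\varphi(q)}q$ becomes $\{m\equiv r\,(\mathrm{mod}\ q),\ p^{\varphi(q)}\nmid m\}$, while the excluded $\{p^{\varphi(q)}\mid m\}$ piece rescales into the same sum under $m\mapsto m/p^{\varphi(q)}$ (using $p^{\varphi(q)}\equiv 1\pmod q$), and the prefactor $p^{\varphi(q)}/(p^{\varphi(q)}-1)$ absorbs the telescoping. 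Crucially, since $T_r(qp^k)=\sum_{n=0}^{p^k-1}\log_p(r+nq)$ for $1\le r<q$, the direct definition of $\psi_p$ also yields $\lim_k T_r(qp^k)/p^k=\psi_p(r/q)+\log_p q$, so the reduction produces the byproduct $q\gamma_p(r,q)=-\psi_p(r/q)-\log_p q$.

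Next, character orthogonality $\mathbf{1}_{m\equiv r\,(\mathrm{mod}\ q)}=q^{-1}\sum_{a=0}^{q-1}\zeta_q^{a(m-r)}$ decomposes $T_r(qp^k)=q^{-1}S(qp^k)+q^{-1}\sum_{a=1}^{q-1}\zeta_q^{-ar}U_a(qp^k)$, where $S(N)=\sum_{m<N}\log_p m$ and $U_a(N)=\sum_{m<N}\zeta_q^{am}\log_p m$. The $a=0$ term is controlled by iterating $S(qp^k)=T(qp^k)+S(qp^{k-1})$ (splitting by $p$-divisibility of $m$, using $\log_p p=0$) together with the approximation $T(qp^k)=qT(p^k)+R_k$ with $|R_k|_p\le p^{-2k}$, derived from $\log_p(tp^k+v)-\log_p v\in p^k\Z_p$ for $v\in\Z_p^\times$; combined with $T(p^k)/p^k\to-(p-1)\gamma_p/p$ from the definition of $\gamma_p$, this gives $\lim_k S(qp^k)/p^k=-q\gamma_p$, so the $a=0$ piece contributes $-\gamma_p$ to $\lim T_r(qp^k)/p^k$ and therefore the $+\gamma_p$ term to $q\gamma_p(r,q)=-\lim T_r(qp^k)/p^k$.

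The heart of the proof, and the main obstacle, is the character-sum limit
$$
\lim_{k\to\infty}\frac{U_a(qp^k)}{p^k}=q\log_p(1-\zeta_q^a)\qquad (a\in\{1,\dots,q-1\});
$$
granted this, the $a\neq 0$ Fourier terms contribute $\sum_{a=1}^{q-1}\zeta_q^{-ar}\log_p(1-\zeta_q^a)$ to $\lim T_r(qp^k)/p^k$ and thus $-\sum_{a=1}^{q-1}\zeta_q^{-ar}\log_p(1-\zeta_q^a)$ to $q\gamma_p(r,q)$, yielding Diamond's identity. This limit is delicate because the naive series $\sum_{m\ge 1}\zeta_q^{am}/m$ diverges $p$-adically ($|\zeta_q^a|_p=1$), forcing a finite-sum regularisation. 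My plan is to follow Diamond's strategy via the $p$-adic log-gamma function $G_p$: apply its Gauss multiplication formula $G_p(x)=(x-\tfrac12)\log_p q+\sum_{b=0}^{q-1}G_p((x+b)/q)$, differentiate to pass from $G_p$ to $\psi_p$, and invert the resulting distribution relation by a discrete Fourier transform in $r\pmod q$, using the cyclotomic identity $\prod_{a=1}^{q-1}(1-\zeta_q^a)=q$, to derive the $p$-adic Gauss closed form $\psi_p(r/q)=-\log_p q-\gamma_p+\sum_{a=1}^{q-1}\zeta_q^{-ar}\log_p(1-\zeta_q^a)$. Substituting this into the byproduct $q\gamma_p(r,q)=-\psi_p(r/q)-\log_p q$ from the first reduction then yields Diamond's identity.
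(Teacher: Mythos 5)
First, a point of reference: the paper does not prove this identity at all --- it is quoted verbatim from Diamond \cite{JD} (p.~334), so there is no in-paper argument to measure you against. Judged on its own terms, your outline has the right architecture (the reduction of the second-branch definition of $\gamma_p(r,q)$ to the first branch via the $N(r,q)$ bookkeeping and the rescaling $m\mapsto m/p^{\varphi(q)}$ is correct, and the Fourier decomposition with the sign check $-q^{-1}(-q\gamma_p)-q^{-1}\sum_a\zeta_q^{-ar}\cdot q\log_p(1-\zeta_q^a)=\gamma_p-\sum_a\zeta_q^{-ar}\log_p(1-\zeta_q^a)$ is consistent), but the two analytic steps that carry all the content are not actually established.

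For the $a=0$ term, the proposed derivation of $\lim_k S(qp^k)/p^k=-q\gamma_p$ by ``iterating'' $S(qp^k)=T(qp^k)+S(qp^{k-1})$ does not work in the $p$-adic metric: the recursion has the form $x_k=p^{-1}x_{k-1}+c_k$ with $|p^{-1}|_p=p>1$, so it is expanding, and convergence of $c_k$ gives nothing about convergence of $x_k$ --- unwinding it produces terms like $p^{-k}R_j$ with $|p^{-k}R_j|_p=p^k|R_j|_p$, which blow up for fixed $j$. (If one \emph{assumes} the limit exists, the recursion does force the value $-q\gamma_p$; it is the existence that needs Diamond's Volkenborn-integral machinery, which you never invoke.) Moreover, the bound $|R_k|_p\le p^{-2k}$ does not follow from the stated justification $\log_p(tp^k+v)-\log_p v\in p^k\Z_p$: the ultrametric inequality only yields $|R_k|_p\le p^{-k}$, and upgrading to $p^{-2k}$ requires isolating the first-order term $p^k(\sum_t t)\sum_{v<p^k,(v,p)=1}v^{-1}$ and a Wolstenholme-type pairing $v\leftrightarrow p^k-v$, none of which appears. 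More seriously, the ``heart of the proof,'' $\lim_k U_a(qp^k)/p^k=q\log_p(1-\zeta_q^a)$, is only asserted: the plan offered for it --- differentiate the Gauss multiplication formula of $G_p$ and Fourier-invert --- cannot succeed, because that formula is an untwisted distribution relation among values of $\psi_p$ and carries no information about $\log_p(1-\zeta_q^a)$; the twisted evaluation $\lim_k p^{-k}\sum_{n<p^k}\zeta^{an}\log_p(x+n)$ is a separate, genuinely analytic input (it is the substance of Diamond's theorem) and is nowhere supplied. As it stands, the proposal reduces the identity to an unproved statement equivalent to the identity itself.
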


Let us now state the pre-requisites from transcendence theory.
We shall need the following result of Baker (see p.11 of \cite{AB})
 involving classical
logarithms of complex numbers.

\begin{thm}[Baker]
If $\alpha_1, \cdots, \alpha_n$ are non-zero algebraic numbers
and $\beta_1, \cdots, \beta_n$ are algebraic numbers, then
$$
\beta_1\log\alpha_1 + \cdots + \beta_n\log\alpha_n
$$
is either zero or transcendental.
\end{thm}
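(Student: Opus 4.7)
This is Baker's celebrated theorem on linear forms in logarithms, so a complete proof lies well outside the scope of a proof sketch and my ``proof'' is in essence to invoke the reference \cite{AB}. Still, I outline the broad strategy of the transcendence method that establishes it.

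The plan is to argue by contradiction. Suppose the quantity $\Lambda := \beta_1 \log\alpha_1 + \cdots + \beta_n \log\alpha_n$ is a nonzero algebraic number, and build an auxiliary function whose analytic and arithmetic properties turn out to be incompatible. First, using Siegel's lemma (a linear-algebra pigeonhole device), one constructs an exponential polynomial of the form
$$
\Phi(z_1, \ldots, z_{n-1}) \;=\; \sum_{\lambda} p(\lambda)\, \alpha_1^{\lambda_1 z_1} \cdots \alpha_{n-1}^{\lambda_{n-1} z_{n-1}}
$$
with algebraic-integer coefficients $p(\lambda)$, not all zero, such that $\Phi$ together with all its partial derivatives up to some prescribed order vanishes on a large cube of integer points. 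The relation defining $\Lambda$ is used to eliminate $\log\alpha_n$, so only $n-1$ complex variables appear.

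Next comes extrapolation. The maximum modulus principle gives an upper bound for $|\Phi|$ on a disk of suitable radius, while a Liouville-type arithmetic lower bound applies whenever $\Phi$ takes a nonzero algebraic value. Combining these bounds with the initial vanishing hypothesis forces $\Phi$ to vanish at many more lattice points than were originally prescribed. Iterating the extrapolation eventually yields so much vanishing that a zero estimate, or a Vandermonde-style determinant argument, produces a non-trivial algebraic relation among $\alpha_1, \ldots, \alpha_n$, which contradicts their assumed independence under logarithms.

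The main obstacle, as in every instance of the transcendence method, is the delicate parameter balancing: the degree of $\Phi$, the size of the initial cube, the order of vanishing, and the number of extrapolation steps must all be coordinated so that the arithmetic lower bound decisively beats the analytic upper bound. This numerology is the technical heart of Baker's argument and is where any attempt would stand or fall; a careful execution is carried out in \cite{AB}.
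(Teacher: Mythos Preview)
Your proposal is appropriate: the paper does not prove Baker's theorem at all but simply quotes it from \cite{AB} (see p.~11), so your approach of invoking the reference is exactly what the paper does. Your additional outline of the auxiliary-function transcendence method goes beyond what the paper provides and is a reasonable high-level summary, though of course it is only a sketch and not a self-contained proof.
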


We shall need analogous results for linear forms in
$p$-adic logarithms. More precisely, we shall need
the following consequence of a theorem
of Kaufman \cite{RK}
as noticed by Murty and Saradha 
(see p. 357 of \cite{MS1}).

\begin{thm}\label{K}
Suppose that  $\alpha_1 , \cdots, \alpha_m$ are
non-zero algebraic numbers that are multiplicatively
independent over $\Q$
and $\beta_1, \cdots, \beta_m$ are arbitrary 
algebraic numbers {\rm (}not all zero{\rm)}.
Further suppose that 
$$ 
|\alpha_i - 1 | < p^{-c} 
\phantom{m}   \text{ for } ~ 1 \le i \le m,
$$ 
where $c$ is a constant which depends only on the 
degree of the number field
generated by $\alpha_1, \cdots, \alpha_m, \beta_1, \cdots, \beta_m$.
Then
$$
\beta_1 \log_p\alpha_1 + \cdots + \beta_m \log_p\alpha_m
$$
is transcendental.
\end{thm}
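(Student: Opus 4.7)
The plan is to convert the theorem into a statement about $p$-adic linear forms in logarithms of algebraic numbers with algebraic coefficients, and then invoke \thmref{K}. The starting point is the identity
$$
\psi_p(r/p^n) + \gamma_p \;=\; \sum_{a=1}^{p^n-1} \zeta_{p^n}^{-ar}\,\log_p\bigl(1 - \zeta_{p^n}^a\bigr),
$$
which I would establish as follows: writing $m = r + p^n s$ in the defining limit for $\gamma_p(r,p^n)$ and comparing with the limit defining $\psi_p(r/p^n)$ yields the clean formula $\psi_p(r/p^n) = -p^n\,\gamma_p(r,p^n)$ (using $\log_p p^n = 0$); substituting into Diamond's identity with $q=p^n$ produces the display. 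Thus every member of the set in the theorem is an explicit $p$-adic linear form in logarithms of algebraic numbers with algebraic coefficients.

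For the first assertion, suppose $\alpha_1 \ne \alpha_2$ are two algebraic elements of the set, coming from indices $r_1, r_2$. Subtracting yields the non-zero algebraic number
$$
\alpha_1 - \alpha_2 \;=\; \sum_{a=1}^{p^n-1}\bigl(\zeta_{p^n}^{-ar_1}-\zeta_{p^n}^{-ar_2}\bigr)\log_p\bigl(1-\zeta_{p^n}^a\bigr).
$$
I would express this linear form in terms of a multiplicatively independent basis $\beta_1,\ldots,\beta_s$ of the group generated, modulo roots of unity, by $\{1-\zeta_{p^n}^a\}$ together with $p$ inside $\overline{\Q}^{\times}$ (the cyclotomic unit theory of $\Q(\zeta_{p^n})$ describes this module precisely; $\log_p$ kills roots of unity). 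This rewrites the right-hand side as $\sum_j d_j\,\log_p \beta_j$ with $d_j\in\overline{\Q}$, and the $d_j$ cannot all be zero because the whole sum equals the non-zero number $\alpha_1-\alpha_2$. To secure the proximity hypothesis of \thmref{K}, I would pass to suitable $p^N$-th powers, using the Iwasawa decomposition $\beta_j = p^{r_j}w_j\alpha_j$ with $\alpha_j\in D$ algebraic, so that $|\alpha_j^{p^N}-1|_p < p^{-c}$ for all $j$ with $N$ sufficiently large, the factor $p^N$ being absorbed into the coefficients. Then \thmref{K} forces $\alpha_1 - \alpha_2$ to be transcendental, contradicting its algebraicity.

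For the second assertion, take $n=1$. Since $1-\zeta_p^{-a} = -\zeta_p^{-a}(1-\zeta_p^a)$ and $\log_p$ is trivial on roots of unity, one has $\log_p(1-\zeta_p^a) = \log_p(1-\zeta_p^{p-a})$; pairing $a$ with $p-a$ in the key identity yields
$$
\psi_p(r/p) + \gamma_p \;=\; \sum_{a=1}^{(p-1)/2}\bigl(\zeta_p^{ar}+\zeta_p^{-ar}\bigr)\log_p\bigl(1-\zeta_p^a\bigr).
$$
For $1\le r_1 < r_2 < p/2$, the coefficient of $\log_p(1-\zeta_p^a)$ in the difference $[\psi_p(r_1/p)+\gamma_p]-[\psi_p(r_2/p)+\gamma_p]$ is $(\zeta_p^{ar_1}+\zeta_p^{-ar_1})-(\zeta_p^{ar_2}+\zeta_p^{-ar_2})$; a discrete Fourier check on $\Z/p\Z$ shows these coefficients vanish simultaneously in $a$ only if $\{r_1,-r_1\} \equiv \{r_2,-r_2\}\pmod{p}$, which the range $1 \le r_i < p/2$ forbids. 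The family $\{1-\zeta_p^a\}_{1\le a\le(p-1)/2}$ is multiplicatively independent modulo torsion (the cyclotomic quotients $(1-\zeta_p^a)/(1-\zeta_p)$ for $2\le a\le(p-1)/2$ give $(p-3)/2$ independent units, and $1-\zeta_p$ supplies the final independent direction via its non-trivial $p$-adic valuation). After the same $p^N$-th power proximity adjustment, \thmref{K} therefore renders the difference transcendental and in particular non-zero.

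The main obstacle is the multiplicative-independence reduction in the first assertion: passing from $\{1-\zeta_{p^n}^a\}$ to a multiplicatively independent basis requires the explicit structure of the cyclotomic-unit module in $\Q(\zeta_{p^n})$, and this is considerably richer for $n>1$ than for $n=1$ because of the norm relations to the intermediate cyclotomic fields $\Q(\zeta_{p^k})$, $1\le k<n$. Setting up \thmref{K} against this background, and verifying en route that the transformed coefficients $d_j$ are genuinely not all zero whenever $r_1\neq r_2$, is the technical heart of the argument.
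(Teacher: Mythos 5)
Your proposal does not prove the statement in question. The statement is \thmref{K} itself: Kaufman's $p$-adic analogue of Baker's theorem on linear forms in $p$-adic logarithms, which the paper does not prove but imports wholesale from Kaufman \cite{RK} via the refinement observed by Murty and Saradha in \cite{MS1}. What you have written is instead a proof sketch of \thmref{CGR2} (the statement about the values $\psi_p(r/p^n)+\gamma_p$), and it proceeds precisely by \emph{invoking} \thmref{K}. Read as a proof of \thmref{K}, your argument is therefore circular: you cannot establish the transcendence of $\beta_1\log_p\alpha_1+\cdots+\beta_m\log_p\alpha_m$ for arbitrary multiplicatively independent algebraic $\alpha_i$ by appealing to that very transcendence result, and nothing in your cyclotomic-unit manipulations, norm relations, or $p^N$-th power proximity adjustments substitutes for it.

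The missing content is the entire transcendence-theoretic core. A genuine proof of \thmref{K} requires the machinery of linear forms in $p$-adic logarithms: the construction of an auxiliary function (or interpolation determinant), extrapolation in the $p$-adic domain where the logarithms converge only on a small disc about $1$ (which is exactly why the hypothesis $|\alpha_i-1|<p^{-c}$ appears), a zero estimate, and a Liouville-type lower bound to reach a contradiction. None of this can be extracted from the structure of cyclotomic units in $\Q(\zeta_{p^n})$, which is where all of your technical effort is concentrated. If your intention was to prove \thmref{CGR2}, your outline is broadly in the spirit of the paper's argument for that theorem (the paper reduces to the quotients $(1-\zeta_{p^n}^a)/(1-\zeta_{p^n})$, applies Proposition~\ref{imp} and Lemma~\ref{imp1}, and then \thmref{K}), but that is a different statement; as an argument for \thmref{K} the proposal has no content.
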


\section{Proof of \thmref{CGR}}

In order to prove \thmref{CGR}, we need the following lemmas.
\begin{lem}\label{lem1}
For all $n>1$ and for all $r \in \N$ with $(r, n)=1$ and $1 \le r < n$, 
all the numbers in the following list 
\begin{equation*}
\gamma, ~~\psi(r/n)
\end{equation*}
are distinct.
\end{lem}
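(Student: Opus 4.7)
The plan is to deduce the lemma from two elementary facts about the real-analytic digamma function: strict monotonicity on $(0,\infty)$ and the evaluation $\psi(1) = -\gamma$. No transcendence input is required at this stage.

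First I would differentiate the defining series term by term to obtain $\psi'(x) = \sum_{n=0}^{\infty}(n+x)^{-2}$, which is strictly positive for real $x > 0$; hence $\psi$ is strictly monotone increasing on $(0,\infty)$. Second, I would specialize the defining series at $x = 1$: the telescoping of $\sum_{n=1}^{\infty}\bigl(\tfrac{1}{n+1} - \tfrac{1}{n}\bigr)$ collapses to $-1$, and substituting into the formula for $-\psi(1)$ yields $\psi(1) = -\gamma$.

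With these tools the two possible coincidences are ruled out directly. For two admissible pairs $(r_1,n_1) \ne (r_2,n_2)$, the condition $(r_i,n_i) = 1$ ensures that the reduced fractions $r_1/n_1$ and $r_2/n_2$ are distinct real numbers in $(0,1)$, so strict monotonicity forces $\psi(r_1/n_1) \ne \psi(r_2/n_2)$. To separate $\gamma$ itself from every $\psi(r/n)$ on the list, note that $r/n < 1$ whenever $n > 1$ and $r < n$, so monotonicity gives $\psi(r/n) < \psi(1) = -\gamma < 0 < \gamma$.

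I do not expect any substantive obstacle: the lemma is a purely elementary analytic observation that $\psi$ is injective on $(0,1)$ and that the image $\psi((0,1))$ lies strictly below $0 < \gamma$. The real work begins in the subsequent arguments, where Gauss's closed form for $\psi(r/n)$ and Baker's theorem will be needed to upgrade this distinctness into the transcendence statement of Theorem~\ref{CGR}.
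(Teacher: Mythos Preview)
Your proposal is correct and follows essentially the same approach as the paper: both arguments rest on the strict monotonicity of $\psi$ on $(0,\infty)$, derived from the positivity of the series $\psi'(x)=\sum_{n\ge 0}(n+x)^{-2}$. Your treatment is in fact slightly more complete, since you explicitly separate $\gamma$ from the values $\psi(r/n)$ via $\psi(r/n)<\psi(1)=-\gamma<0<\gamma$, a step the paper's proof leaves unaddressed.
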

\begin{proof}
For a real number $x > 0$, we have
\begin{equation*}
\psi'(x) = \sum_{n = 0}^\infty \frac{1}{(n + x)^2} > 0.
\end{equation*}
Hence $\psi(x)$ is strictly increasing function for $x >0$. 
\end{proof}

\begin{lem}\label{lem2}
For $q > 1$ and $ 1 \le a < q$ with $(a,q) =1$, one has
$$
 - \psi(a/q)  - \gamma  =  \log q  - 
  \sum_{b=1}^{q-1} e^{-2\pi i ba/q} \log(1 - e^{2\pi ib/q}).
$$
\end{lem}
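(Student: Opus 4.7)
The plan is to prove this as an arithmetic consequence of the Taylor expansion of $\log(1-z)$ on the unit circle combined with the orthogonality of additive characters modulo $q$. The identity itself is essentially the classical Gauss formula for $\psi(a/q)$, so the work is in expressing both sides via a common partial-sum limit.

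First, I would start from the series $-\log(1-z) = \sum_{m=1}^{\infty} z^m/m$ applied at the roots of unity $z = e^{2\pi i b/q}$ for $1 \le b \le q-1$. Since the partial sums of $e^{2\pi i b m/q}$ in $m$ are bounded and $1/m$ decreases to zero, Dirichlet's test guarantees convergence. I would then introduce
\begin{equation*}
S_N := \sum_{b=1}^{q-1} e^{-2\pi i b a/q} \sum_{m=1}^{N} \frac{e^{2\pi i b m/q}}{m}
\end{equation*}
and, working with these \emph{finite} partial sums, swap the order of summation. Using the standard orthogonality relation
\begin{equation*}
\sum_{b=1}^{q-1} e^{2\pi i b(m-a)/q} \;=\;
\begin{cases} q-1 & \text{if } m \equiv a \pmod q, \\ -1 & \text{otherwise,} \end{cases}
\end{equation*}
this rewrites as
\begin{equation*}
S_N \;=\; q \sum_{\substack{1 \le m \le N \\ m \equiv a \,(\mathrm{mod}\, q)}} \frac{1}{m} \;-\; \sum_{m=1}^{N} \frac{1}{m}.
\end{equation*}

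Next, I would specialize $N = Mq$ so that the arithmetic-progression sum becomes a clean reindexing
\begin{equation*}
q \sum_{\substack{1 \le m \le Mq \\ m \equiv a \,(\mathrm{mod}\, q)}} \frac{1}{m} \;=\; \sum_{j=0}^{M-1} \frac{1}{a/q + j}.
\end{equation*}
The definition of $\psi$ gives $-\psi(a/q) - \gamma = q/a + \sum_{j=1}^{\infty}\bigl(\tfrac{1}{j+a/q} - \tfrac{1}{j}\bigr)$, so combined with $\sum_{j=1}^{M-1} 1/j = \log M + \gamma + o(1)$ this yields
\begin{equation*}
\sum_{j=0}^{M-1} \frac{1}{a/q + j} \;=\; -\psi(a/q) + \log M + o(1),
\end{equation*}
while the harmonic bound $\sum_{m=1}^{Mq} 1/m = \log(Mq) + \gamma + o(1)$ handles the second term. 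Letting $M \to \infty$ gives $S_{Mq} \to -\psi(a/q) - \log q - \gamma$.

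Finally, since $S_N$ also converges to $-\sum_{b=1}^{q-1} e^{-2\pi i b a/q} \log(1-e^{2\pi i b/q})$ by the first step, equating the two limits and rearranging yields the claimed identity. The main subtlety is that the interior series $\sum_m e^{2\pi i b m/q}/m$ converges only conditionally, so one cannot carelessly swap the double sum or pass the limit through in the "wrong" variable; the remedy is to keep $N$ finite throughout the character-orthogonality step and only take $N = Mq \to \infty$ at the very end, where both the digamma side and the logarithm side are simultaneously controlled.
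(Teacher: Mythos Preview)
Your argument is correct. The orthogonality computation, the reindexing at $N=Mq$, and the two asymptotics
\[
\sum_{j=0}^{M-1}\frac{1}{a/q+j}=-\psi(a/q)+\log M+o(1),\qquad \sum_{m=1}^{Mq}\frac{1}{m}=\log M+\log q+\gamma+o(1)
\]
combine exactly as you say to give $\lim_{M\to\infty}S_{Mq}=-\psi(a/q)-\gamma-\log q$, and equating this with the limit $-\sum_{b=1}^{q-1}e^{-2\pi iba/q}\log(1-e^{2\pi ib/q})$ yields the lemma. Your care about keeping $N$ finite during the swap and only letting $N=Mq\to\infty$ afterwards is exactly the right way to handle the conditional convergence; the boundary value $\sum_{m\ge1}e^{2\pi ibm/q}/m=-\log(1-e^{2\pi ib/q})$ is justified by Abel's theorem.

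As for comparison: the paper does not supply its own proof of this lemma but simply refers the reader to page~311 of \cite{MS}. There the identity is obtained by a closely related route, writing $\psi(a/q)+\gamma$ via the Hurwitz zeta function and then Fourier-inverting, which in substance is the same orthogonality-of-characters computation you carry out. Your version has the merit of being fully self-contained and of making the analytic subtlety (conditional convergence at the boundary) explicit rather than hidden inside a citation.
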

For a proof of this lemma, see page 311 of \cite{MS}.

\smallskip

We are now ready to complete the proof of \thmref{CGR}.
We prove this theorem by the method of contradiction. Suppose
that the above assertion is not true. 
By the work of Murty and Saradha \cite{MS}, it follows that
$\gamma$ and $\psi(a/q)$ for some $1 \le a < q$ with $(a,q)=1$
cannot be both algebraic. So assume that there exists
$1\leq a_1<q_1$ with $(a_1,q_1)=1$ and $1\leq a_2<q_2$
with $(a_2,q_2)=1$ such that both 
$\psi(a_1/q_1)$ and $\psi(a_2/q_2)$  
are algebraic numbers.  Note that by \lemref{lem2}, we have
\begin{eqnarray}\label{eq2}
\psi(a_1/q_1) - \psi(a_2/q_2) &=&   \log\frac{q_2}{q_1}  
- \sum_{b=1}^{q_2-1} e^{-2\pi i ba_2/q_2} \log(1 - e^{2\pi ib/q_2}) \nonumber \\
&& + \phantom{m} \sum_{c=1}^{q_1-1} e^{-2\pi i ba_1/q_1} \log(1 - e^{2\pi ib/q_1}).
\end{eqnarray}
The right hand side is a algebraic linear combination of linear forms 
of logarithms of algebraic numbers. Also by \lemref{lem1},
it is non-zero. Hence by Baker's theorem 
it is transcendental, a contradiction. 
This completes the proof of the theorem.

\section{Proofs of other theorems}

Next, we prove a proposition which will play a pivotal role in proving the rest of the
theorems.

\begin{prop}\label{imp}
For $p_i \in \mathcal{P}$, let $q_i = p_i^{m_i}$, where $m_i \in \N$
and $\zeta_{q_i}$ be a primitive $q_i$-th root of unity.
Then for any finite subset $\rm J$ of $\mathcal{P}$, the numbers
$$
1-\zeta_{q_i},  \phantom{m}  \frac{1-\zeta_{q_i}^{a_i}}{1-\zeta_{q_i}}, 
\text{  where }~1< a_i < \frac{q_i}{2}, ~(a_i,q_i)=1~\text{and } p_i \in {\rm J} ,  
$$
are multiplicatively independent.
\end{prop}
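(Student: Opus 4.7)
The plan is to work in the compositum $K = \Q(\zeta_N)$ with $N = \prod_{p_i \in J} q_i$, and argue by contradiction. Suppose that
\[
\prod_{p_i \in J} (1 - \zeta_{q_i})^{c_i} \cdot \prod_{p_i \in J}\prod_{a_i} (\xi^{(i)}_{a_i})^{c_{i, a_i}} \;=\; 1
\]
is a nontrivial multiplicative dependence with integer exponents, where $\xi^{(i)}_{a_i} := (1 - \zeta_{q_i}^{a_i})/(1 - \zeta_{q_i})$. The key structural facts are that each $\xi^{(i)}_{a_i}$ is a \emph{unit} in $\mathcal{O}_K$ (since $(a_i, q_i) = 1$), whereas $1 - \zeta_{q_i}$ generates the unique prime above $p_i$ in $\Q(\zeta_{q_i})$. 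The plan has three stages: (i) valuations kill the exponents $c_i$; (ii) Galois symmetry decouples the cyclotomic fields; (iii) the classical multiplicative independence of cyclotomic units within $\Q(\zeta_{p^m})$ closes the argument.

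For stage (i), fix $p_j \in J$ and any prime $\mathfrak{p}_j$ of $K$ above $p_j$. Since $p_j \nmid N/q_j$, the extension $K/\Q(\zeta_{q_j})$ is unramified above $p_j$, so $v_{\mathfrak{p}_j}(1 - \zeta_{q_j}) = 1$; meanwhile $v_{\mathfrak{p}_j}(1 - \zeta_{q_i}) = 0$ for $i \neq j$, and $v_{\mathfrak{p}_j}(\xi^{(i)}_{a_i}) = 0$ for all $i, a_i$. Applying $v_{\mathfrak{p}_j}$ to the relation forces $c_j = 0$ for every $j$. For stage (ii), set $A_i := \prod_{a_i}(\xi^{(i)}_{a_i})^{c_{i,a_i}} \in \Q(\zeta_{q_i})^{\times}$, so the surviving relation is $\prod_i A_i = 1$. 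Fix $i_0$ and let $\sigma \in \mathrm{Gal}(K/\Q)$ act trivially on $\zeta_{q_j}$ for every $j \neq i_0$. Then $\sigma(A_j) = A_j$ for $j \neq i_0$, and dividing the $\sigma$-image of the relation by the relation itself yields $\sigma(A_{i_0}) = A_{i_0}$. By CRT, $\mathrm{Gal}(K/\Q) \cong \prod_i \mathrm{Gal}(\Q(\zeta_{q_i})/\Q)$, so the restrictions of such $\sigma$ to $\Q(\zeta_{q_{i_0}})$ exhaust $\mathrm{Gal}(\Q(\zeta_{q_{i_0}})/\Q)$; hence $A_{i_0} \in \Q$. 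Being also a global unit, $A_{i_0} = \pm 1$.

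For stage (iii) we invoke the classical result (see, e.g., Washington, \emph{Introduction to Cyclotomic Fields}, Chapter~8) that for any prime power $q = p^m > 2$, the cyclotomic units $(1-\zeta_q^a)/(1-\zeta_q)$ with $1 < a < q/2$ and $(a,q)=1$ are multiplicatively independent modulo $\{\pm 1\}$ in $\Q(\zeta_q)^{\times}$; the count $\phi(q)/2 - 1$ matches the Dirichlet rank of the unit group of $\Q(\zeta_q)$. Applied to each $i_0$, the equality $A_{i_0} = \pm 1$ forces $c_{i_0,a_{i_0}} = 0$ for every $a_{i_0}$, which together with stage (i) completes the proof. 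The only delicate ingredient is this last classical fact; the first two stages are routine manipulations with the valuation- and Galois-theoretic structure of cyclotomic compositums, and the main obstacle to watch out for is simply verifying that all of $\mathrm{Gal}(\Q(\zeta_{q_{i_0}})/\Q)$ is realized by restrictions of $\sigma$'s fixing the other $\zeta_{q_j}$, which is immediate from CRT.
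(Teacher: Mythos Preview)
Your proof is correct and reaches the same conclusion, but the argument is organized differently from the paper's. The paper proceeds by induction on $|{\rm I}|$: it takes the norm from $\Q(\zeta_N)$ to $\Q$ to kill the exponents on the $1-\zeta_{q_i}$ (since these have norm a nontrivial power of $p_i$), then isolates one index $i_1$, observes that the two sides of the resulting relation lie in the linearly disjoint fields $\Q(\zeta_{q_{i_1}})$ and $\Q(\zeta_{N/q_{i_1}})$ whose intersection is $\Q$, concludes both sides are $\pm 1$, squares, and invokes the single-prime cyclotomic unit independence and the induction hypothesis. Your approach replaces the norm computation by a direct valuation argument at each prime $\mathfrak{p}_j$ of $K$ above $p_j$, and replaces the induction entirely by the Galois/CRT decoupling that shows each factor $A_i$ is already rational and hence $\pm 1$. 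Both routes rest on the same classical input (multiplicative independence of the cyclotomic units $(1-\zeta_q^a)/(1-\zeta_q)$ for $1<a<q/2$ in a single prime-power cyclotomic field); what your version buys is that it handles all indices at once without an inductive scaffold, while the paper's version is slightly more elementary in that it never names the Galois group explicitly and uses only norms and field intersections.
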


\begin{proof}
Write ${\rm I} = \{ i ~|~ p_i \in {\rm J} \}$.
We will prove this proposition by induction on $|\rm I|$. First suppose that $|\rm I| =1$.
Then the proposition is true by the work of Murty and Saradha (see page 357
of \cite{MS1}). Next suppose that the proposition is true for all $\rm I$ with 
$ |\rm I| <  n$. Now suppose that $|\rm I | =n$.
Note that for any $i$, the numbers
\begin{equation}\label{obs}
\frac{1-\zeta_{q_i}^{a_i}}{1-\zeta_{q_i}}, \text{  where } 
1<a_i<\frac{q_i}{2}, ~(a_i,q_i)=1
\end{equation}
are multiplicatively independent units in $\Q(\zeta_{q_i})$ 
(see page 144 of \cite{LW}).  
Suppose that there exist integers $\alpha_i, \beta_{a_i}$ for 
$i \in {\rm I}$ and with $a_i$ as in the lemma 
such that
\begin{equation}\label{rel}
\prod_{i \in {\rm I}} \left\{(1 - \zeta_{q_i})^{\alpha_i}
{\prod_{\substack{1< a_i  < q_i/2 \\
                              (a_i,q_i)=1}}
\left(\frac{1-\zeta_{q_i}^{a_i}}
{1-\zeta_{q_i}}\right)^{\beta_{a_i}} }\right\} ~= ~1. 
\end{equation}
Taking norm on both sides, we get
\begin{equation*}
\prod_{i \in {\rm I}}p_{i}^{\alpha_i A_i} ~=~ 1, \text{  where  } A_i \ne 0, ~A_i \in \N. 
\end{equation*}
Since $p_i$'s are distinct primes, we have $\alpha_i=0$ for all $i \in  {\rm I}$. Thus \eqref{rel} 
reduces to 
\begin{equation}\label{1}
\prod_{i \in {\rm I}}
\prod_{\substack{1< a_i <  {q_i} /2 \\ (a_i,q_i)=1}}
\left(\frac{1-\zeta_{q_i}^{a_i}}
{1-\zeta_{q_i}}\right)^{\beta_{a_i}} ~=~ 1. 
\end{equation}
Since $|\rm I| > 1$, there exists $i_1, i_2 \in {\rm I}$ with $i_1 \ne i_2$ such that
\begin{equation*}
\prod_{i \in {\rm I},\atop
             i \ne i_1}\prod_{\substack{1< a_i < q_i/2\\ 
(a_i,q_i)=1}}\left(\frac{1-\zeta_{q_i}^{a_i}}{1-\zeta_{q_i}}\right)^{\beta_{a_i}}
~=~ \prod_{\substack{1< a_{i_1}  < q_{i_1}/2\\ 
(a_{i_1},q_{i_1})=1}}\left(\frac{1-\zeta_{q_{i_1}}^{a_{i_1}}}
{1-\zeta_{q_{i_1}}}\right)^{-\beta_{a_{i_1}}}. 
\end{equation*}
Note that the left hand side of the above equation
belongs to the number field $\Q( \zeta_{\delta})$, where
 $ \delta = \prod_{ i \in {\rm I} \setminus \{i_1\} } {q_i}$ 
whereas the right hand side belongs
to $\Q(\zeta_{q_{i_1}})$. Since
$$
\Q(\zeta_{\delta}) \cap \Q(\zeta_{q_{i_1}})
= \Q,
$$ 
we see that both sides of the above equation is a rational number having norm $1$. 
Thus we have
\begin{equation*}
\prod_{i \in {\rm I} \setminus \{i_1\}}\prod_{\substack{1< a_i < q_i / 2\\ (a_i,q_i)=1}}
\left(\frac{1-\zeta_{q_i}^{a_i}}{1-\zeta_{q_i}}\right)^{\beta_{a_i}}
= \prod_{\substack{1< a_{i_1} < q_{i_1}/2\\  (a_{i_1},q_{i_1})=1}}
 \left(\frac{1-\zeta_{q_{i_1}}^{a_{i_1}}}{1-\zeta_{q_{i_1}}}
 \right)^{-\beta_{a_{i_1}}} ~=~ \pm 1.  
\end{equation*}
Squaring both sides, we get
\begin{equation*}
\prod_{i \in {\rm I} \setminus \{i_1\}}\prod_{\substack{1< a_i < q_i /2\\
 (a_i,q_i)=1}}\left(\frac{1-\zeta_{q_i}^{a_i}}{1-\zeta_{q_i}}\right)^{2\beta_{a_i}}
=\prod_{\substack{1< a_1< q_{i_1} /2\\
 (a_{i_1},q_{i_1})=1}}\left(\frac{1-\zeta_{q_{i_1}}^{a_{i_1}}}
 {1-\zeta_{q_{i_1}}}\right)^{-2\beta_{a_{i_1}}} ~=~ 1. 
\end{equation*}
Using \eqref{obs}, we see that 
$\beta_{a_{i_1}} = 0$ for all 
$1< a_{i_1} < q_{i_1} / 2$ and $(a_{i_1}, q_{i_1})= 1$.
Then \eqref{1} reduces to  
\begin{equation*}
 \prod_{i \in {\rm I} \setminus {i_1}}\prod_{\substack{1 < a_i < q_i / 2 \\ 
(a_i,q_i)=1}}
\left(\frac{1-\zeta_{q_i}^{a_i}}{1-\zeta_{q_i}}\right)^{\beta_{a_i}}=1. 
\end{equation*}
Now by induction hypothesis, we have $\beta_{a_i} =0$
for all $a_i$ and $i \in {\rm I} \setminus \{i_1\}$. 
This completes the proof of the lemma.
\end{proof}
Using the above \propref{imp} and \thmref{K}, we can prove the
following statement.

\begin{lem}\label{imp1}
Let $ {\rm J}$ be any finite subset of $\mathcal{P}$.
For $q \in {\rm J}$ and $1 < a < q/2$, let
$s_q, t_q^a$ be arbitrary algebraic numbers, not all zero.
Further, let $t_q^a$ be not all zero when $p \in {\rm J}$.
Then 
$$
\sum_{q \in {\rm J}} s_q  ~\log_p(1-\zeta_{q})   ~+~ 
\sum_{q \in {\rm J} \atop 1< a < \frac{q}{2} } t_q^a  ~\log_p 
\left(\frac{1-\zeta_{q}^{a}}{1-\zeta_{q}}\right) 
$$
is transcendental.
\end{lem}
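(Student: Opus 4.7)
The plan is to rewrite the linear form, which I shall denote $\Lambda$, as a $\overline{\Q}$-linear combination of $p$-adic logarithms of multiplicatively independent algebraic \emph{units}, and then invoke \thmref{K}. Proposition~\ref{imp} (applied with all $m_i=1$) supplies the required multiplicative independence of the numbers $1-\zeta_q$ and $(1-\zeta_q^a)/(1-\zeta_q)$, so the main tasks are (i) to eliminate the one non-unit that can appear, namely $1-\zeta_p$ when $p\in {\rm J}$, and (ii) to bring the remaining units into the region $|\alpha-1|_p<p^{-c}$ required by \thmref{K}.

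For (i), I would start from the cyclotomic identity $\prod_{a=1}^{p-1}(1-\zeta_p^a)=p$ together with $\log_p p=0$, which forces $\sum_{a=1}^{p-1}\log_p(1-\zeta_p^a)=0$. Splitting each summand as $\log_p(1-\zeta_p)+\log_p\bigl((1-\zeta_p^a)/(1-\zeta_p)\bigr)$ and using the root-of-unity identity $1-\zeta_p^{p-a}=-\zeta_p^{-a}(1-\zeta_p^a)$ (which makes $\log_p\bigl((1-\zeta_p^a)/(1-\zeta_p)\bigr)$ invariant under $a\leftrightarrow p-a$) yields
\begin{equation*}
\log_p(1-\zeta_p)=-\frac{2}{p-1}\sum_{1<a<p/2}\log_p\!\left(\frac{1-\zeta_p^a}{1-\zeta_p}\right).
\end{equation*}
Substituting this into $\Lambda$ removes the $\log_p(1-\zeta_p)$ term and merely shifts each $t_p^a$ to $t_p^a-2s_p/(p-1)$. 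The rewritten $\Lambda$ then pairs with the set of algebraic $p$-adic units
\begin{equation*}
\mathcal{U}=\{1-\zeta_q\,:\,q\in{\rm J}\setminus\{p\}\}\cup\{(1-\zeta_q^a)/(1-\zeta_q)\,:\,q\in{\rm J},\,1<a<q/2,\,(a,q)=1\},
\end{equation*}
which is multiplicatively independent by Proposition~\ref{imp}. The two stated hypotheses on the coefficients combine, via a brief case analysis, to guarantee that the new coefficients are not all zero; the auxiliary condition ``$t_q^a$ not all zero when $p\in{\rm J}$'' is included to block the degenerate possibility that the shift eliminates every nonzero contribution.

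For (ii), observe that \thmref{K} demands $|\alpha-1|_p<p^{-c}$, where $c$ depends only on the number field $K$ generated by all the $\zeta_q$ and the $s_q,t_q^a$. I would meet this by replacing each $\alpha\in\mathcal{U}$ by a uniform power $\alpha^N$ with $N=(p^f-1)p^k$, where $f$ is the residue degree of $p$ in $K$ and $k$ is large enough to push each $\alpha^N$ deep into the principal unit filtration. Since $\log_p(\alpha^N)=N\log_p\alpha$ and multiplicative independence is preserved under taking $N$-th powers, the form $N\Lambda$ satisfies the hypotheses of \thmref{K}, which then yields the transcendence of $\Lambda$. I expect the main obstacle to be the coefficient-nonvanishing check following the substitution in step (i); the rest of the argument is standard bookkeeping around the two input results.
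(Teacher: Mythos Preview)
Your approach is exactly the paper's: use Proposition~\ref{imp} for multiplicative independence, raise everything to a uniform power $A$ so that $|\alpha^A-1|_p<p^{-M}$, and then invoke \thmref{K}. The paper's argument is a three-line sketch that simply records this power trick for $p$-adic units $\alpha\in\Z[\zeta_\delta]$ and cites the two inputs; it does not spell out what to do with the non-unit $1-\zeta_p$ when $p\in{\rm J}$, whereas your step~(i) makes this explicit via the identity $\log_p(1-\zeta_p)=-\tfrac{2}{p-1}\sum_{1<a<p/2}\log_p\bigl((1-\zeta_p^a)/(1-\zeta_p)\bigr)$. So your write-up is a fleshed-out version of the same proof.

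Your instinct that the coefficient-nonvanishing after the substitution is the delicate point is well placed, and in fact the stated hypotheses do not quite close it: for $p\ge 5$, ${\rm J}=\{p\}$, $s_p=1$, and $t_p^a=2/(p-1)$ for every $1<a<p/2$, both hypotheses hold yet your own identity forces $\Lambda=0$. This is a wrinkle in the lemma's formulation rather than in your method (and the paper's terse proof shares it); in every application the paper makes of the lemma, the coefficients involved---sums like $\zeta_q^{ar}+\zeta_q^{-ar}$ or explicit nonzero constants on $\log_p(1-\zeta_q)$ for $q\ne p$---visibly avoid this degenerate configuration, so the argument goes through where it is used.
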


\begin{proof}
Write $\delta = \prod_{q \in {\rm J}} q$.
For any $\alpha \in \Z[\zeta_{\delta}]$ with $p \nmid \alpha$ and $M \in \N$, one has
$$
| \alpha^A - 1 | < p^{-M}
$$
for some $A \in \N$. By choosing $M$ sufficiently large and using \thmref{K}
and \propref{imp}, we get the lemma.
\end{proof}

\begin{lem}\label{req}
Let $q_1, q_2$ be two distinct prime numbers and 
$1 \le r_i < q_i$ for $i=1,2$. Then
\begin{equation}\label{trans}
\sum_{b =1}^{q_2-1}\zeta_{q_2}^{- b r_2}\log_p(1-\zeta_{q_2}^{b}) 
- \sum_{a=1}^{q_1-1}\zeta_{q_1}^{-a r_1}\log_p(1-\zeta_{q_1}^{a})
\end{equation}
is transcendental.
\end{lem}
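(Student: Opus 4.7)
The plan is to rewrite \eqref{trans} as an algebraic linear combination of the $p$-adic logarithms appearing in \lemref{imp1} and then to appeal directly to that lemma. For a prime $q$ and an integer $r$ with $(r,q)=1$, I will first use the splitting
$$
\log_p(1-\zeta_q^a) \;=\; \log_p(1-\zeta_q) \;+\; \log_p\!\left(\frac{1-\zeta_q^a}{1-\zeta_q}\right).
$$
Weighting by $\zeta_q^{-ar}$ and summing over $a$, the first term contributes $-\log_p(1-\zeta_q)$ thanks to $\sum_{a=1}^{q-1}\zeta_q^{-ar}=-1$. For the second term I use the involution $a \leftrightarrow q-a$ together with the identity $(1-\zeta_q^{q-a})/(1-\zeta_q) = -\zeta_q^{-a}(1-\zeta_q^a)/(1-\zeta_q)$ and the vanishing of $\log_p$ on every root of unity to fold the indices into the range $1<a<q/2$. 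The outcome is the clean identity
$$
\sum_{a=1}^{q-1}\zeta_q^{-ar}\log_p(1-\zeta_q^a) \;=\; -\log_p(1-\zeta_q) \;+\; \sum_{1<a<q/2}\bigl(\zeta_q^{ar}+\zeta_q^{-ar}\bigr)\log_p\!\left(\frac{1-\zeta_q^a}{1-\zeta_q}\right).
$$

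Applying this to each of the two inner sums in \eqref{trans}, the full expression becomes an algebraic linear combination of exactly the units from \propref{imp} with index set ${\rm J}=\{q_1,q_2\}$. The coefficient of $\log_p(1-\zeta_{q_1})$ is $+1$ and that of $\log_p(1-\zeta_{q_2})$ is $-1$, so not all algebraic coefficients are zero. Moreover, since each $q_i$ is odd and $q_i \nmid a r_i$ throughout the summation range, each coefficient $\pm(\zeta_{q_i}^{ar_i}+\zeta_{q_i}^{-ar_i})$ of the secondary logarithms is nonzero, which verifies the extra side hypothesis of \lemref{imp1} whenever $p = q_i \ge 5$. Transcendence of \eqref{trans} then follows from \lemref{imp1}.

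The main obstacle I anticipate is the exceptional case $p=q_i\in\{2,3\}$, where the range $1<a<q_i/2$ is empty and the side condition in \lemref{imp1} must be examined directly. Here a short separate computation should suffice: using $\log_p(p)=0$ together with the fact that $(1-\zeta_3^2)/(1-\zeta_3)=-\zeta_3^2$ is a root of unity (so has vanishing $p$-adic logarithm), one checks that the entire $q_i$-block of \eqref{trans} already collapses to zero. What remains of \eqref{trans} then falls under \lemref{imp1} applied with the singleton ${\rm J}=\{q_{3-i}\}$, which does not contain $p$, and transcendence follows as before.
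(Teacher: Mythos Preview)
Your argument is correct and follows essentially the same route as the paper's own proof: split off $\log_p(1-\zeta_q)$ using $\sum_{a=1}^{q-1}\zeta_q^{-ar}=-1$, fold the remaining sum into the range $1<a<q/2$ via $\log_p(1-\zeta_q^{-t})=\log_p(1-\zeta_q^t)$, observe that the resulting coefficients $\zeta_{q_i}^{ar_i}+\zeta_{q_i}^{-ar_i}$ are nonzero, and invoke \lemref{imp1}. Your treatment of the exceptional case $p=q_i\in\{2,3\}$ (where the secondary range is empty) is an extra bit of care that the paper does not spell out; your observation that the $q_i$-block then vanishes and that \lemref{imp1} applies with the singleton ${\rm J}=\{q_{3-i}\}$ is correct and closes that gap cleanly.
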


\begin{proof}
For any $q > 1$ and $(r, q)=1$, we know that
\begin{equation*}
\sum_{a=1}^{q-1}\zeta_q^{-ar} = -1.
\end{equation*}
Hence \eqref{trans} can be written as 
\begin{eqnarray}\label{rel1}
 \log_p\frac{(1-\zeta_{q_1})}{(1-\zeta_{q_2})}  
  - \sum_{a =1}^{q_1-1}\zeta_{q_1}^{-a r_1}\log_p
\left(\frac{1-\zeta_{q_1}^{a}}{1-\zeta_{q_1}}\right)
 +  \sum_{b=1}^{q_2-1}\zeta_{q_2}^{- b r_2}\log_p
 \left(\frac{1-\zeta_{q_2}^{ b}}{1-\zeta_{q_2}}\right). 
\end{eqnarray}
It is because
\begin{equation*}
(1-\zeta_q^{-t})=-\zeta_q^{-t}(1-\zeta_q^t)
\end{equation*}
for any $t \in \N$ and the $p$-adic logarithm is zero on roots of unity, 
we have
\begin{equation*}
\log_p(1-\zeta_q^{-t})=\log_p(1-\zeta_q^t). 
\end{equation*}
Note that the summands in \eqref{rel1} for $a=1, b = 1$ 
and $a = q_1 -1, b = q_2 -1$
are zero. Now pairing up $a$ with $-a$ and $b$ with $-b$ 
in \eqref{rel1}, we get
\begin{eqnarray*}
\log_p\frac{(1-\zeta_{q_1})}{(1-\zeta_{q_2})}
 ~-~   \sum_{1 < a < q_1/2 } \alpha_a
\log_p\left(\frac{1-\zeta_{q_1}^{a}}{1-\zeta_{q_1}}\right) 
~+~  \sum_{1<  b < q_2/2 } \beta_b \log_p 
\left(\frac{1-\zeta_{q_2}^{b}}{1-\zeta_{q_2}}\right),
\end{eqnarray*}
where $\alpha_a = (\zeta_{q_1}^{-a r_1}  +  \zeta_{q_1}^{ a r_1}), 
\beta_b = (\zeta_{q_2}^{-b r_2}  +  \zeta_{q_2}^{ b r_2})$ are 
non-zero algebraic numbers. Hence
using \lemref{imp1}, we deduce that \eqref{rel1}
is transcendental. 
\end{proof}

\subsection{Proof of \thmref{req1}}

We now complete the proof of \thmref{req1}.
Suppose that two of the numbers from the above set
are algebraic. It follows from the works of Murty and Saradha
(see page 351 of \cite{MS1}) that one of them cannot be equal to
$\gamma_p$ or both of them can not be of the form $\gamma_p(r_1, q)$
and $\gamma_p(r_2, q)$. 

Without loss of generality, we can assume that these two numbers
are of the form $\gamma_p(r_1,q_1)$, where $1 \le r_1 < q_1$ and 
$\gamma_p(r_2,q_2)$, where $1 \le r_2 < q_2$ and $q_1 \ne q_2$.
Then $q_1\gamma_p(r_1,q_1)-q_2\gamma_p(r_2,q_2)$ is algebraic. 
Now by Diamond's theorem (see theorem $18$ of \cite{JD}), we have
\begin{eqnarray}\label{right}
&& q_1\gamma_p(r_1,q_1)-q_2\gamma_p(r_2,q_2) \nonumber \\
&=&  -  \sum_{a=1}^{q_1-1}\zeta_{q_1}^{-a r_1}\log_p(1-\zeta_{q_1}^{a})
~+~ \sum_{b =1}^{q_2-1}\zeta_{q_2}^{- b r_2}\log_p(1-\zeta_{q_2}^{b}). 
\end{eqnarray} 
The left hand side is algebraic by assumption whereas the
right hand side is transcendental by \lemref{req}, a contradiction.
This completes the proof of the theorem.

\subsection{Proof of \thmref{nor}}

It follows from Diamond's theorem \cite{JD} that 
\begin{eqnarray}\label{mod}
 \gamma_p^*(r_1, q)- \gamma_p &=&  
 - \sum_{a=1}^{q-1}\zeta_q^{-ar} \log_p(1-\zeta_q^a) \nonumber \\
\text{ and }  \phantom{m} 
\gamma_p^*(r_1, q)- \gamma_p^*(r_2, q)  
&=& 
 - \sum_{a=1}^{q-1}(\zeta_q^{-ar_1} -  \zeta_q^{-ar_2}) 
 \log_p(1-\zeta_q^a)
\end{eqnarray}
where $1 \le r_1, r_2 < q / 2$ and $r_1 \ne r_2$.
Transcendence of the first number follows from the works
of Murty and Saradha (see page 358 of \cite{MS2}) while
that of the second one follows
from the fact that 
$\zeta^{-ar_1} + \zeta^{ar_1} \ne \zeta^{-ar_2} + \zeta^{-ar_2}$
when $1 \le a, r_1, r_2 < q / 2$ with $r_1 \ne r_2$ .
Again by Diamond's theorem, we have
$$
\gamma_p^*(r_1,q_1)- \gamma_p^*(r_2,q_2)  = 
-  \sum_{a=1}^{q_1-1}\zeta_{q_1}^{-a r_1}\log_p(1-\zeta_{q_1}^{a})
~+~ \sum_{b =1}^{q_2-1}\zeta_{q_2}^{- b r_2}\log_p(1-\zeta_{q_2}^{b}),
$$
where $q_1 \ne q_2$.  By \lemref{req},  we know that this number is
transcendental and hence non-zero. This completes the proof.

\subsection{Proof of \thmref{CGR1}}

We will prove this theorem by contradiction.
First note that it is impossible to have 
$$
\gamma_p(r_1,q) = \gamma_p = \gamma_p(r_2, q),
$$
as otherwise $\gamma_p^*(r_1,q) =   \gamma_p^*(r_2, q)$, a contradiction
to \thmref{nor}. Next assume that 
$$
\gamma_p(r_1,q_1) = \gamma_p = \gamma_p(r_2, q_2),
$$
where $q_1 \ne q_2$ and $ 1 \le r_i < q_i / 2$.  Using Diamond's theorem, we can write
\begin{equation}\label{const}
(q_i - 1) \gamma_p =  
 \log_p(1-\zeta_{q_i})
~-~ \sum_{1< a < q_i /2}(\zeta_{q_i}^{ar_i} + \zeta_{q_i}^{-ar_i})
\log_p\left(\frac{1 - \zeta_{q_i}^{a}}{ 1 - \zeta_{q_i}} \right), 
\end{equation}
where $i = 1, 2$.  From this, we get
\begin{eqnarray*}
&&  0 = (q_2 - 1) \log_p(1-\zeta_{q_1})  ~-~ (q_1 - 1)  \log_p(1-\zeta_{q_2}) \\
&&  -  \sum_{1< a < q_1 /2} (q_2 -1 )
(\zeta_{q_1}^{ar_1} + \zeta_{q_1}^{-ar_1})
\log_p\left(\frac{1 - \zeta_{q_1}^{a}}{ 1 - \zeta_{q_1}} \right) \\
&&  +  \sum_{1< a < q_2 /2} (q_1 -1 )
(\zeta_{q_2}^{ar_2} + \zeta_{q_2}^{-ar_2})
\log_p\left(\frac{1 - \zeta_{q_2}^{a}}{ 1 - \zeta_{q_2}} \right) \\
\end{eqnarray*}
which is transcendental by \lemref{imp1}, a contradiction. 
Now suppose that 
\begin{eqnarray*}
\gamma_p(r_1,q_1) = \gamma_p(r_2,q_2) 
\text{      and       }
 \gamma_p(r_3,q_3) = \gamma_p(r_4,q_4)
\end{eqnarray*}
for some $1 \le r_i < q_i / 2,   1 \le i \le 4$. We may assume that $q_1\neq q_2$.
For if $q_1=q_2=q$ (say), then  $\gamma_p(r_1,q)=\gamma_p(r_2,q)$
implies $\gamma_p^*(r_1,q)=\gamma_p^*(r_2,q)$, a contradiction to 
\thmref{nor}.  Similarly, we may assume that $q_3 \ne q_4$. Now
from equation $\gamma_p(r_1,q_1)=\gamma_p(r_2,q_2)$, we deduce that
\begin{eqnarray}\label{const1}
(q_1 - q_2) \gamma_p  
&=&  q_2\log_p(1-\zeta_{q_1}) ~-~  q_1\log_p(1-\zeta_{q_2}) \nonumber \\
&-&  
q_2 \sum_{1<a_1<q_1/2}(\zeta_{q_1}^{a_1r_1} +   \zeta_{q_1}^{-a_1r_1})
 \log_p\left(\frac{1-\zeta_{q_1}^{a_1}}{1-\zeta_{q_1}}\right) \nonumber \\
& + &
q_1\sum_{1<a_2<q_2/2}(\zeta_{q_2}^{a_2r_2}
+ \zeta_{q_2}^{-a_2r_2})
\log_p\left(\frac{1-\zeta_{q_2}^{a_2}}{1-\zeta_{q_2}}\right). 
\end{eqnarray}
Similarly from equation 
$\gamma_p(r_3,q_3) = \gamma_p(r_4,q_4)$, 
we deduce that 
\begin{eqnarray*} 
(q_3-q_4)\gamma_p =
q_4\log_p(1-\zeta_{q_3})&-&q_4\sum_{1<a_3<q_3/2}(\zeta_{q_3}^{a_3r_3}
+ \zeta_{q_3}^{-a_3r_3})\log_p\left(\frac{1-\zeta_{q_3}^{a_3}}{1-\zeta_{q_3}}\right) \\
- q_3\log_p(1-\zeta_{q_4})&+&q_3\sum_{1<a_4<q_4/2}(\zeta_{q_4}^{a_4r_4}
+\zeta_{q_4}^{-a_4r_4})\log_p\left(\frac{1-\zeta_{q_4}^{a_4}}{1-\zeta_{q_4}}\right). 
\end{eqnarray*}
Eliminating $\gamma_p$ from the above two equations, we get
\begin{eqnarray*} 
&& 
0 = (q_3 - q_4) q_2 \log_p(1-\zeta_{q_1})   
- (q_3 - q_4) q_1 \log_p(1-\zeta_{q_2}) \nonumber \\
&& ~~ - (q_1 -  q_2) q_4  \log_p(1-\zeta_{q_3}) 
 +  (q_1 -  q_2) q_3  \log_p(1-\zeta_{q_4})  \nonumber \\
&&  ~~+ ~~ \sum_{i=1}^{4}\sum_{1<a_i<q_i/2}c_{a_i}
\log_p\left(\frac{1-\zeta_{q_i}^{a_i}}{1-\zeta_{q_i}} \right),  
\end{eqnarray*}
where $c_{a_i}$ are algebraic numbers for all $1 \le i \le 4$ and $1< a_i < q_i/2$.
Again using \lemref{imp1}, the number is transcendental.
This completes the proof.

To prove the second part of the theorem, suppose that 
$$
 \gamma_p(r_1,q_1) = \gamma_p  \text{     or     } \gamma_p(r_1,q_1) = \gamma_p(r_2,q_2),
$$ 
where $q_1 \ne q_2$ and $1 \le r_i < q_i/2$ for $i=1, 2$. 
Again, we deduce the result from \eqref{const} or \eqref{const1} using
\lemref{imp1}.

\subsection{Proof of \thmref{CGR2}}

Write
$$
{\rm S} = \left\{ \psi_p(r / p^n)  + \gamma_p   ~:~  1 \le r < p^n,  ~(r, p)=1 \right\}. 
$$  
Suppose that $a, b \in {\rm S}$  be distinct and algebraic. Suppose that 
$$
(a, b) = (\psi_p(r_1/p^n) +  \gamma_p ,~\psi_p(r_2/p^n) + \gamma_p).
$$
Using Diamond's theorem, we have
\begin{eqnarray*}
&& \psi_p(r_1/p^n) -  \psi_p(r_2/p^n) \\
&=&   \sum_{a=1}^{p^n-1} \zeta^{-ar_1} \log_p(1 - \zeta_{p^n}^a) 
 - \sum_{a=1}^{p^n-1} \zeta^{-ar_2} \log_p(1 - \zeta_{p^n}^a)\\
 &=&  \sum_{ 1 < a < p^n /2 \atop (a,p)=1}  \alpha_a
 \log_p \left( \frac{1 - \zeta_{p^n}^{a}}{1 - \zeta_{p^n}} \right),
\end{eqnarray*}
where $\alpha_a$'s are algebraic numbers. But by \lemref{imp1},  this  is 
necessarily transcendental, a contradiction. 

Moreover when $n=1$, we have
$$
\psi_p(r_1/p) -  \psi_p(r_2/p) = 
\sum_{ 1 < a < p /2 }  (\zeta^{-ar_1} + \zeta^{ar_1} -  \zeta^{-ar_2} - \zeta^{ar_2} ) 
 \log_p \left( \frac{1 - \zeta_{p}^{a}}{1 - \zeta_{p}} \right).
$$
Since $ 1\le r_1, r_2 < p/2$, the above linear form in logarithm is transcendental
by \lemref{imp1} and hence non-zero. This completes the proof.

\bigskip
\noindent
{\bf Acknowledgements.} It is our pleasure to thank Ram Murty for going through
an earlier version of the paper. We would also like to thank Purusottam Rath 
for many helpful discussions. The second author
would like to thank ICTP for the hospitality where the final part of the work
was done.

\bigskip

\end{document}